\theoremstyle{plain}
\newtheorem{T}{Theorem}[section]
\newtheorem{Cor}[T]{Corollary}
\newtheorem{TL}[T]{Lemma}
\newtheorem{Prop}[T]{Proposition}
\newcommand\mps[1]{\marginpar{\small\sf }}
\theoremstyle{definition}\newtheorem{D}{Definition}
\theoremstyle{remark}\newtheorem{Ex}{Example}
\theoremstyle{remark}
\theoremstyle{remark}\newtheorem{remark}{Remark}[T]
\newcommand{\Aut}{\hbox{Aut}}
\newcommand{\bb}[1]{\mathbb{#1}}
\newcommand\calo{\mathcal O}
\newcommand\ibid{\emph{ibid}}
\newcommand\idem{\emph{idem}}
\newcommand\loccit{\emph{loc.cit.}}
\newcommand{\mc}[1]{\mathcal{#1}}
\newcommand{\lci}{local complete intersection }
\newcommand{\Hom}{\mathrm{Hom}}
\newcommand\op[1]{\operatorname{#1}}
\newcommand\rk{\operatorname{rk}}
\theoremstyle{plain}
\theoremstyle{definition}
\newcommand{\ses}[5]{$$
\xymatrix@1{ 0 \ar[r] & {{#1}} \ar[r]^-{{#2}} & {{#3}} \ar[r]^-{{#4}} &
{{#5}} \ar[r] & 0
\\ }$$}
\newcommand{\sesdot}[5]{$$
\xymatrix@1{ 0 \ar[r] & {{#1}} \ar[r]^-{{#2}} & {{#3}} \ar[r]^-{{#4}} &
http://www.expressen.se/nyheter/1.2165248/braket-i-radiodebatten-ohly-halsar-inte
{{#5}} \ar[r] & 0. \\ }$$}
\newcommand{\sesbig}[5]{$$\xymatrix@1{{\raisebox{1.0ex}[3.0ex][1.0ex]{$0$}}
\ar@<0.6ex>[r] & {\raisebox{1.0ex}[3.0ex][1.0ex]{${#1}$}}
\ar@<0.6ex>[r]^{#2} & {\raisebox{1.0ex}[3.0ex][1.0ex]{${#3}$}}
\ar@<0.6ex>[r]^{#4} & {\raisebox{1.0ex}[3.0ex][1.0ex]{${#5}$}}
\ar@<0.6ex>[r] & {\raisebox{1.0ex}[3.0ex][1.0ex]{$0$}}}$$}
\newcommand{\Comment}[1]{}
\title{The excess formula in functorial form}
\author{Dennis Eriksson}
\address{Mathematical Sciences, University of Gothenburg and Chalmers University Of Technology, 41296 Gothenburg, Sweden}
\email{dener@chalmers.se}
\begin{document}
\maketitle
\date{\today}
\begin{abstract} This article is motivated by the need for better understanding of refined Riemann-Roch theorems and the behavior of the determinant of the cohomology. This poses a certain problem of functoriality and can be understood as that of giving refined constructions of operations in algebraic $K$-theory. In this article this is specialized to mean refining the excess formula, which measures the failure of base change, to the level of Deligne's virtual category. We give a natural set of properties for such a refinement, and prove that there exists a unique family of excess formulas on this refined level satisfying these properties. 
\end{abstract}
\tableofcontents
{\bf Keywords:} Virtual categories, Excess formula, $K$-theory. \\\\
{\bf AMS 2010 Mathematics Subject Classification:}   14C35,  14C40, 19E08, 19E15.
\newpage

\section{Introduction} Recall the excess formula in $K$-theory. It relates the behavior of base change of pushforward whenever the resulting diagram is not Tor-independent (for example non-flat base change) and we use the following formulation: For a Cartesian diagram $$\xymatrix{ X \ar[r]^{g'} \ar[d]^{f'} & Y \ar[d]^f \\
X' \ar[r]^g & Y',}$$ where $g$ and $g'$ are arbitrary morphisms, and $f$ and $f'$ are projective \lci morphisms we have an equality
 $$f'_!\left( \lambda_{-1}(E) \otimes {g'}^!(x) \right) =  g^! f_!(x)$$
 in $K_0(X')$, for $x \in K_0(Y)$, at least if the schemes are suitable (having the resolution property, see Section \ref{chapter:excess}). Here $E$ denotes the excess bundle measuring the difference of the conormal bundles of $f$ and $f'$ when they are closed immersions (see Section 3, Definition \ref{defn:excessbundle}). A reference is \cite{Riemann-RochAlgebra}, chapter VI. The functorial form of the excess formula is then a construction of a canonical isomorphism of objects in the virtual category, a certain categorical refinement of $K_0$ introduced in \cite{determinant} (see below and beginning of section \ref{chapter:excess}), that realizes this formula. Informally, the main issue of this article is to show that the deformation to the normal cone is "functorial enough", which forces us to go through a lot of cumbersome notation, and to give a set of properties that characterize the most obvious candidate for an excess isomorphism which is obtained by the aforementioned deformation. We solve this problem and give properties that characteristize the lifting uniquely, up to sign. \\\\
The general framework is inspired by Deligne's article \cite{determinant} where there is a proposed problem of constructing refinements in terms of "higher equivalences", of the Grothendieck-Riemann-Roch identity $$\op{ch}(f_! V) = f_* (\op{ch}(V) \op{Td}(T_f)),$$ which is valid in Chow-theory, for a proper morphism $f: X \to Y$ of suitable schemes and $V$ is a vector bundle on $X$. Here $f_!$ (resp. $f_*$) is the pushforward in $K$-theory (resp. Chow-theory), see \cite{Riemann-RochAlgebra}, chapter V, \S 7 or \cite{SGA6}, VIII, Th\'eor\`eme 3.6 for a precise formulation of the theorem. We insist on the notation $f_!$ to distinguish it from the direct image of a sheaf. The term "higher equivalences" can be interpreted as introducing functoriality, i.e. replacing the groups involved by categories, the maps by functors and the identity by a natural transformation of functors. In \cite{determinant} it is suggested that there might be a higher categorical framework for this. As an approximate categorification of the $K$-theory involved, Deligne introduces the virtual category, which we can think of as a truncation of some still unknown higher category. It amounts to the fundamental groupoid of the Quillen $Q$-construction of the category of vector bundles on the scheme in question, which has the structure of a category and satisfies a certain universal property similar to that of $K_0$ (see beginning of section \ref{chapter:excess}). It is particulary well-suited for studying secondary invariants in $K$-theory and the degree one part of the above identity, and can be seen to contain information about the determinant of the cohomology. This is also what is studied in \cite{determinant}, in the case of families of smooth curves, where the best possible results are obtained, completely describing the twelfth power of the determinant of the cohomology in terms of functorial line bundles corresponding to terms in Grothendieck-Riemann-Roch (see \loccit~Th\'eor\`eme 9.9 for a precise statement). For degenerations of curves this isomorphism was studied by T. Saito and was used to prove a discriminant-conductor-formula (cf. \cite{T.Saito-conductor}).\\\\
One of issues encountered when studying the above types of functoriality is related to the excess formula, the main topic of this article. In \cite{DRR3} this formula will serve as the model for the functorial Grothendieck-Riemann-Roch-theorem in the case of closed immersions, but should have independent interest. The excess formula is a case where one can make use of the deformation to the normal cone which is general enough to go through the main arguments of functorality, and at the same time an easy enough situation so that the arguments become writable. It should also be noted that these types of excess formulas are one of the main ingredients for Lefschetz fixed point formulas in equivariant $K$-theory (cf. \cite{Thomason7}, \cite{Thomason5}) whose functorial versions we will also return to in the future article already mentioned. The main result (Theorem \ref{thm:excess}) is also a $K$-theoretic version of a result by Franke in \cite{Franke-unpublished}, which unfortunately remains unpublished, where a functorial form of the Grothendieck-Riemann-Roch theorem already appears. It is moreover a slight improvement on Theorem 3.1 in \cite{Thomason5}, which gives the theorem "up to homotopy". The present article ameliorates this to give the same type of result but refined "up to homotopy up to homotopy" (and "up to sign"), when we consider the virtual category as a truncation of the $K$-theory space using the description as a fundamental groupoid mentioned above. \\\\
The article is organized as follows. In Section 2 and 3 we establish preliminaries for virtual categories and define a "rough" excess isomorphism. "Rough" here indicates that it a priori depends on some choices in a model situation. In Section 4 we formulate the main result (Theorem \ref{thm:excess}), which states that given a natural set of properties of the excess isomorphism, it exists and is uniquely determined by those properties. This is proved in the sections 5-7. The proof of the main result proceeds by a deformation to the normal cone argument which we review in the appendix together with some more or less well-known facts, formulated in the category of algebraic stacks. \\\\
It should finally be noted that one of the main initial  motivations for Deligne's program was to understand the Quillen metric. From this point of view the deformation to the normal cone is quite natural, since it is usually not enough to compute with Tor-functors as in \cite{SGA6} loc.cit., if one also wants to control metrics. This particular application to metrics will be considered in another paper. \\\\
Acknowledgements: I'd like to thank Marc Levine, Damian R\"ossler and Takeshi Saito for several helpful commentaries and criticisms on an early version of the material that is in this paper. I also want to thank Jo\"el Riou for pointing out that an argument in section \ref{section:generalexcess} was insufficient and Lars Halvard Halle for his reading and comments on the manuscript.
\newpage

\section{Preliminaries and conventions} \label{chapter:excess} \begin{flushright}  \end{flushright}

In this article all schemes and algebraic spaces are separated and noetherian. A(n algebraic) stack is an Artin stack $X$ over a basescheme $S$. Given a smooth presentation $X_0 \to X$ it is equivalent to (cf. \cite{MB}, Proposition 4.3.1) an $S$-space in groupoids $[X_\bullet] = [X_1 {\rightrightarrows}_t^s X_0]$ (see \idem, 2.4.3, for the definition), where $X_i$ are $S$-algebraic spaces and the two morphisms $s$ and $t$ are smooth and \linebreak $\delta = (s,b): X_1 \to X_0 \times_S X_0$ is separated and quasi-compact. A morphism of stacks is a 1-morphism, and we will pretend that there are no issues with 2-morphisms. For the reader worried about this the results only apply to the case of equivariant schemes (possibly with trivial group), which is also the main application in mind. These problems can be circumvented by introducing the appropriate natural transformations corresponding to 2-commutative diagrams. In the same vein, a Cartesian diagram of morphisms of stacks is a 2-Cartesian diagram. A quasi-coherent sheaf $\mc F$ on $X$ is a Cartesian quasi-coherent sheaf on the associated simplicial $S$-space $X_\bullet$ for some smooth presentation $X_0 \to X$ (see \cite{Olsson-sheavesartin} Definition 6.9 for the precise definition or \cite{MB} D\'efinition 13.1.7 and Proposition 13.2.1). Morphisms of quasi-coherent sheaves are given by morphisms respecting this extra structure. A representable morphism of stacks $f: X \to Y$ gives rise to a commutative diagram of Cartesian squares $$\xymatrix{
X_1 \times_{X_0} X_1 \ar@<1ex>[d] \ar@<2ex>[d] \ar[d] \ar[r]^{f_2}& Y_1 \times_{Y_0} Y_1 \ar@<1ex>[d] \ar@<2ex>[d] \ar[d] \\
X_1 \ar[r]^{f_1} \ar@<1ex>[d] \ar[d] & Y_1 \ar@<1ex>[d] \ar[d] \\
X_0 \ar[r]^{f_0} & Y_0 }$$
where $f_i$ are morphisms of algebraic spaces, and $X_0 \to X$ is a smooth presentation and $Y_0 = Y_0 \times_Y X \to X$. We will often implicitly use this diagram to make constructions by a Cech argument. In this article we will basically only be concerned with representable morphisms. As a guiding word, properties or operations on schemes which commute with smooth base change or are local for smooth morphisms on algebraic spaces carry over to this situation, and we will assume throughout that this is so unless explicitly mentioned. We will moreover freely use the results of \cite{MB}. \\

We review also the definition of the virtual category $V(\mc C)$ of an exact category $\mc C$. Recall first that a Picard category $P$ is a symmetric monoidal groupoid such that for any object $x$ in the category, $x + : P \mapsto x + P$ is an equivalence of categories. More informally a "categorical group", or a group object in the category of groupoids, with sum and associativity-isomorphisms instead of identities (cf. \cite{determinant}, 4.1. Section 4 of \idem~is also the main reference for virtual categories used here). An additive functor of Picard categories is a functor respecting these structures.
\begin{D} [\cite{determinant}, 4.3]\label{def:determinantfunc} Denote by $(\mc C, iso)$ the subcategory of $\mc C$ with the same objects but only isomorphisms as morphisms. Suppose we have functor $$[-]: (\mc C, iso) \to P$$ where $P$ is a Picard category, satisfying the following three conditions:
\begin{enumerate}
  \item Additivity on exact sequences, i.e. for an exact sequence $$0 \to A \to B \to C \to 0$$ we have an isomorphism $[B] \simeq [A] + [C]$, and compatibility with isomorphisms of exact sequences.
  \item For a zero-object $0_{\mc C}$ (resp. $0_P$) of $\mc C$ (resp. $P$), we are given an isomorphism $[0_{\mc C}] \simeq 0_P$ in $P$. If $$0 \to A \to B \to 0 \to 0$$ is an exact sequence, with the first map being an isomorphism $f$, then the induced isomorphism $[B] \simeq [A] + [0] \simeq [A]$ is $[f]$. We demand the analogous statement for an exact sequence $0 \to 0 \to A \to B \to 0$.
    \item The additivity on exact sequences is compatible with admissible filtrations.
\end{enumerate}
Any such functor is called a determinant functor. The virtual category is a Picard category $V(\mc C)$, together with a determinant functor $[-]:(\mc C, iso) \to V(\mc C)$ such that any determinant functor $[-]: (\mc C, iso) \to P$ factors uniquely up to unique natural transformation through $(\mc C, iso) \to V(\mc C)$. \end{D}

\begin{T}[\cite{determinant}, section 4] \label{Thm:deligne-virtual} The virtual category exists, and moreover, for any Picard category $P$, the functor from $\Hom(V(\mc C), P) \to \Hom((\mc C, iso), P)$ is an equivalence of categories on the full subcategories of additive functors and determinant functors.
\end{T}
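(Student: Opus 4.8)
The plan is to realise $V(\mc C)$ as the fundamental groupoid of the $K$-theory space of $\mc C$ and then to extract the universal property from the low-dimensional cells of that space. Let $Q\mc C$ be Quillen's $Q$-construction on the exact category $\mc C$ and $BQ\mc C$ its classifying space, pointed at the zero object, so that $\pi_{i+1}(BQ\mc C)=K_i(\mc C)$. The biexact sum $\oplus$ makes $Q\mc C$ symmetric monoidal, and passing to classifying spaces (through the associated $\Gamma$-space, or Waldhausen's $S_\bullet$-construction) endows $BQ\mc C$ with a coherently commutative $H$-space structure. I set $V(\mc C):=\Pi_1\big(\Omega BQ\mc C\big)$, the fundamental groupoid of the based loop space; the loop multiplication and its coherences descend to make $V(\mc C)$ a Picard category, with $\pi_0 V(\mc C)=\pi_1 BQ\mc C=K_0(\mc C)$ and $\Aut_{V(\mc C)}(0)=\pi_2 BQ\mc C=K_1(\mc C)$.

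\emph{The determinant functor.} To an object $A$ of $\mc C$ I attach the loop $[A]$ at the basepoint of $BQ\mc C$ formed from the two distinct edges $0\to A$ of $Q\mc C$, one coming from the admissible monomorphism $0\rightarrowtail A$ and the other from the admissible epimorphism $A\twoheadrightarrow 0$; an isomorphism $A\xrightarrow{\sim}B$ induces a homotopy of the corresponding loops, hence a morphism $[A]\to[B]$ in $V(\mc C)$, making $[-]\colon(\mc C,iso)\to V(\mc C)$ a functor. A short exact sequence $0\to A'\to A\to A''\to 0$ gives a standard $2$-simplex of $BQ\mc C$ exhibiting a homotopy $[A]\simeq[A']\cdot[A'']$, and by tracking the simplicial identities (and the degenerate simplices) one checks that this homotopy is natural in isomorphisms of short exact sequences, reduces to the identity in the trivial cases of condition (b), and is compatible with admissible filtrations through the corresponding $3$-simplices; thus conditions (a)--(c) of Definition \ref{def:determinantfunc} hold and $[-]$ is a determinant functor.

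\emph{Universality.} For a Picard category $P$, composition with $[-]$ yields the comparison functor $\Hom(V(\mc C),P)\to\Hom((\mc C,iso),P)$ between additive and determinant functors, and I must show it is an equivalence. For full faithfulness I would use that $V(\mc C)$ is generated, as a Picard category, by the objects $[A]$ (since $\pi_0=K_0(\mc C)$ is generated by their classes and every object of a Picard groupoid is isomorphic to one representing its $\pi_0$-class) together with the morphisms induced by isomorphisms and by short exact sequences of $\mc C$ (since, by additivity and the standard presentation of $K_1(\mc C)=\Aut_{V(\mc C)}(0)$ by automorphisms of objects, these morphisms and the structural isomorphisms of the Picard category generate all automorphism groups): an additive functor is then pinned down, up to a unique monoidal natural isomorphism, by its restriction along $[-]$, and likewise for monoidal natural transformations. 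For essential surjectivity I must produce, from a determinant functor $[-]_P\colon(\mc C,iso)\to P$, an additive functor $F\colon V(\mc C)\to P$ with $F\circ[-]\cong[-]_P$; I would fix an explicit simplicial (or CW) model for $BQ\mc C$, set $F([A])=[A]_P$ on the generating loops, and extend over the $1$-, $2$- and $3$-cells by means of the additivity isomorphisms of $[-]_P$, the point being that the relations imposed by those cells are precisely what axioms (a)--(c) guarantee. Equivalently, one may avoid the topology by presenting $V(\mc C)$ directly by generators (one object per object of $\mc C$, one morphism per isomorphism and per short exact sequence) and relations (the coherences in Definition \ref{def:determinantfunc}), and then verify that the resulting Picard category has $\pi_0=K_0(\mc C)$ and $\pi_1=K_1(\mc C)$, so that it is equivalent to $\Pi_1(\Omega BQ\mc C)$; or one may argue at the level of spectra, using that Picard groupoids model $1$-truncated connective spectra, to reduce everything to a universal property of $\tau_{\le 1}K(\mc C)$.

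\emph{The main obstacle.} The real content lies in this coherence bookkeeping: one must show that every relation among morphisms of $V(\mc C)$ forced by the $2$- and $3$-simplices of $BQ\mc C$ --- equivalently, by the defining relations of $K_1(\mc C)$ --- is a consequence of the three determinant-functor axioms, so that $F$ is genuinely well defined and unique. This rests on Quillen's additivity theorem for the $Q$-construction and on a careful analysis of the $1$-type of the $K$-theory spectrum; it is also exactly the step where the "homotopies up to homotopy" alluded to in the introduction must be organised, and where the constructions become as cumbersome as advertised.
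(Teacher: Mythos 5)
The paper does not actually prove this statement: it is imported wholesale from Deligne (\cite{determinant}, section 4), and the only gloss the text adds is the remark that $V(\mc C)$ admits a description as the fundamental groupoid of the Quillen $Q$-construction, with isomorphism classes $K_0(\mc C)$ and automorphism groups $K_1(\mc C)$. Your outline is consistent with that route, and its first parts are correct and standard: the model $\Pi_1(\Omega BQ\mc C)$, the loop $[A]$ built from the two canonical edges $0\to A$ of $Q\mc C$, and the $2$-simplices attached to short exact sequences are exactly Quillen's description of classes in $\pi_1(BQ\mc C)=K_0(\mc C)$.

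As a proof, however, there is a genuine gap, and you have located it yourself in the paragraph you call ``the main obstacle''. The entire content of the theorem is the claim that the three axioms of Definition \ref{def:determinantfunc} generate \emph{all} relations among morphisms of $\Pi_1(\Omega BQ\mc C)$ --- equivalently, that the Picard category presented by one generator per object, one morphism per isomorphism and per short exact sequence, and the relations (a)--(c), has $\Aut(0)$ canonically isomorphic to $K_1(\mc C)$. Deferring this to ``coherence bookkeeping'' and ``a careful analysis of the $1$-type'' is deferring the theorem itself. Moreover, the one concrete input you invoke for full faithfulness --- ``the standard presentation of $K_1(\mc C)$ by automorphisms of objects'' --- is not available for a general exact category: automorphisms of objects are not known to generate $K_1(\mc C)$ in general, and the presentation that does exist (Nenashev's, by double short exact sequences) is itself a nontrivial theorem that postdates Deligne and would still have to be reconciled with axioms (a)--(c). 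Deligne's own argument proceeds instead by analysing the $2$-skeleton of $BQ\mc C$ (equivalently the $1$-truncation of the $K$-theory spectrum, using that Picard categories are classified by $(\pi_0,\pi_1,k)$); a complete proof must either carry out that analysis or verify that the double-exact-sequence relations follow from the determinant-functor axioms. Neither is done here.
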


By \idem, the category $V(\mc C)$ also has the description as the fundamental groupoid of the Quillen Q-construction. Thus the group of isomorphism classes of any object of the virtual category is usual Grothendieck group, $K_0(\mc C)$ and the automorphism group of any object is isomorphic to Quillen's $K_1(\mc C)$ so the virtual category interpolates between the two. From this description it follows that faithfulness (resp. fullness) of functors between virtual categories can be read from injectivity (resp. bijectivity) on induced maps on $K_1$. An additive functor between virtual categories is an equivalence of categories if it induces an isomorphism on $K_0$ and $K_1$.

\begin{D} For an algebraic stack $X$ we denote by $V(X)$ the virtual category of vector bundles on $X$. We denote by $+: V(X) \times V(X) \to V(X)$ the natural direct sum functor of virtual vector bundles. We denote by $\otimes: V(X) \times V(X) \to V(X)$ the natural tensor product on the virtual category induced by the tensor product of vector bundles. \end{D}
Given any morphism $f: X \to Y$, we have functors $f^!: V(Y) \to V(X)$ induced by pulling back vector bundles. We will sometimes, for a vector bundle $V$, write $f^* V$ for the associated object in the virtual category to emphasize that the object is just the pullback of the vector bundle along $f$. The association $X \to V(X)$ is a functor from the 2-category of stacks to the 2-category of Picard categories (Picard categories, additive functors, natural transformation of additive functors). \\

The following is proved as in \cite{Thomason4}, Theorem 3.1, which is by reduction by naturality to the case considered in \cite{Quillen}, Theorem 2.1, p. 58.
\begin{TL} \label{lemma:projbundleformulastacks} [Projective bundle theorem] Let $X$ be an algebraic stack and $V$ be a vector bundle on $X$ of rank $r$, and set $\bb P(V) = \op{Proj}(\op{Sym} V^\vee)$. Then there is an isomorphism $K_i(X)^r \simeq K_i(\bb P(V))$ induced by $(V_i) \mapsto \sum f^* V_i \otimes \calo(i)$ on the level of vector bundles.
\end{TL}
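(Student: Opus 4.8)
The plan is to define the map as the one induced by an evident exact functor, to reduce the question of its bijectivity to the case of a scheme by descent along a smooth presentation, and there to invoke Quillen's projective bundle theorem. First I would observe that, writing $f\colon\mathbb{P}(V)\to X$ for the projection, the functor $\operatorname{Vect}(X)^{r}\to\operatorname{Vect}(\mathbb{P}(V))$ carrying $(V_{0},\dots,V_{r-1})$ to $\bigoplus_{j=0}^{r-1}f^{*}V_{j}\otimes\calo(j)$ is exact and compatible with isomorphisms, hence induces a homomorphism $\phi\colon K_{i}(X)^{r}\to K_{i}(\mathbb{P}(V))$; the additivity theorem identifies $\phi$ with the sum of the maps induced by the individual exact functors $V_{j}\mapsto f^{*}V_{j}\otimes\calo(j)$, and $\phi$ is moreover a homomorphism of graded $K_{*}(X)$-modules for the $\otimes$-structure. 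It remains to prove $\phi$ is an isomorphism for every $i$. For later use I record the ingredients the argument will rely on: the tautological exact sequence $0\to\calo(-1)\to f^{*}V\to Q\to 0$ with $Q$ of rank $r-1$, the Koszul resolution attached to it, the projection formula, and the computations $f_{*}\calo(n)=\operatorname{Sym}^{n}V^{\vee}$ and $R^{q}f_{*}\calo(n)=0$ for $n\geq 0$, $q>0$ — all of which are compatible with arbitrary flat, in particular smooth, base change on $X$.

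Next I would invoke the case of a scheme, or more generally of an algebraic space with an ample family of line bundles (or just the resolution property): there $\phi$ is an isomorphism by Quillen's projective bundle theorem, \cite{Quillen}, Theorem 2.1, in the form of \cite{Thomason4}, Theorem 3.1. What matters for the reduction is the mechanism of that proof: one produces a two-sided homotopy inverse to $\phi$ whose components are, up to shift and sign, obtained by applying $Rf_{*}$ to twists of the input by the bundles occurring in the Koszul resolution above — equivalently, by convolution with the Beilinson-type resolution of the diagonal of $\mathbb{P}(V)\times_{X}\mathbb{P}(V)$, which has precisely $r$ terms — and the verification that the two composites with $\phi$ are the identity uses only the projection formula and the cohomology computation recorded above. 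No global hypothesis on $X$ enters beyond the one ensuring that the complexes $Rf_{*}(\cdots)$ are represented by classes of vector bundles.

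Finally I would carry this down to the stack by descent. Choosing a smooth presentation $X_{0}\to X$, with associated groupoid $X_{1}\rightrightarrows X_{0}$ and triple fibre product $X_{2}$, and letting $V_{i}$ be the pullback of $V$ to $X_{i}$, the projective bundles $\mathbb{P}(V_{i})\to X_{i}$ present $\mathbb{P}(V)\to X$, and a vector bundle on $X$ (resp. on $\mathbb{P}(V)$) is the same datum as one on $X_{0}$ (resp. $\mathbb{P}(V_{0})$) together with a descent datum over $X_{1}$ (resp. $\mathbb{P}(V_{1})$) satisfying the cocycle condition over $X_{2}$. Since pullback, the twists $\calo(j)$, the relative cotangent bundle, the Koszul and Beilinson resolutions, and $Rf_{*}$ all commute with the smooth structure maps of the groupoid, the homotopy inverse constructed over the algebraic space $X_{0}$ in the previous step carries descent data to descent data compatibly with the cocycle identity, and hence glues to a two-sided inverse of $\phi$ over $X$. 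I expect the one genuinely delicate point to be exactly this compatibility — that Quillen's construction of the inverse, and in particular the resolution-of-the-diagonal step, is natural enough to be run $2$-functorially over the whole simplicial presentation — which is precisely the blanket principle adopted in Section~\ref{chapter:excess} that operations local for the smooth topology pass to algebraic stacks; granting it, what is left is a routine bookkeeping of descent data.
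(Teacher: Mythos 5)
Your proposal is correct and follows essentially the same route as the paper, which disposes of this lemma in one line by citing Thomason's Theorem 3.1 in \cite{Thomason4} — itself a ``reduction by naturality'' to Quillen's projective bundle theorem \cite{Quillen}, Theorem 2.1. Your descent along a smooth presentation, resting on the naturality of Quillen's construction of the inverse, is exactly that reduction spelled out in the stack language the paper adopts as a blanket principle in Section \ref{chapter:excess}.
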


\begin{D} \label{defn:resolutionprop} An algebraic stack $X$ has the resolution property if every coherent sheaf on $X$ admits a surjection from a vector bundle. \end{D}
The main theorem is formulated in terms of algebraic stacks (with the resolution property), but in practical situations it seems one only needs to consider the case of schemes with the action of an algebraic group. Indeed, the result in \cite{Totaro} states that noetherian normal algebraic stacks with affine stabilizer groups at closed points have the resolution property if and only if they are the stack-quotients of a quasi-affine scheme with the action of $GL_n$, and the reader not familiar with stacks lose little or nothing in supposing that we are actually even working with schemes. We refer to \cite{Thomason3} for a detailed study of schemes with the resolution property for equivariant coherent sheaves. We will need the following two statements:
\begin{TL} Suppose $X$ is an algebraic stack which has the resolution property. Then:
\begin{itemize}
  \item If $i: Z \to X$ is a (representable) closed immersion, then $Z$ has the resolution property.
  \item If $V$ is a vector bundle on $X$, then $\bb P(V)$ has the resolution property.
\end{itemize}
\end{TL}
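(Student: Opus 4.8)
The plan is to prove both statements by the same mechanism: push a coherent sheaf forward to $X$, resolve it there by a vector bundle using the resolution property of $X$, and pull the resolution back, exploiting in each case that the relevant counit of adjunction is surjective (in fact an isomorphism in the closed immersion case). Two classical facts are used as black boxes: that $i_*$ and $\pi_*$ (for $\pi\colon \bb P(V)\to X$ the projection) take coherent sheaves to coherent sheaves, and the relative version of Serre's theorem on generation by twisted global sections. As announced in Section \ref{chapter:excess}, both are statements about representable morphisms which hold for noetherian algebraic spaces and are local for the smooth topology on the target, so they transport to our setting by passing to a smooth presentation; compare \cite{Thomason3}.

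For the closed immersion $i\colon Z \to X$: let $\mc G$ be a coherent sheaf on $Z$. Then $i_* \mc G$ is coherent on $X$, so by the resolution property there is a vector bundle $\mc E$ on $X$ and a surjection $\mc E \twoheadrightarrow i_* \mc G$. Applying the right exact functor $i^*$ (underived pullback of sheaves) yields a surjection $i^* \mc E \twoheadrightarrow i^* i_* \mc G$, and the counit $i^* i_* \mc G \to \mc G$ is an isomorphism because, locally on a presentation, $i_* \mc G$ is annihilated by the ideal of $Z$, so that $i_* \mc G \otimes_{\calo_X} \calo_Z = \mc G$. Composing, the surjection $i^* \mc E \twoheadrightarrow \mc G$ exhibits $\mc G$ as a quotient of the vector bundle $i^* \mc E$ on $Z$.

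For $\pi\colon \bb P(V) \to X$: let $\mc G$ be a coherent sheaf on $\bb P(V)$ and let $\calo(1)$ be the relative tautological line bundle, which is $\pi$-ample since $\pi$ is a projectivized bundle. By relative Serre generation, for $n$ sufficiently large the sheaf $\pi_*(\mc G \otimes \calo(n))$ is coherent on $X$ (using that $\pi$ is proper and representable) and the counit $\pi^* \pi_*(\mc G \otimes \calo(n)) \to \mc G \otimes \calo(n)$ is surjective; a single $n$ suffices because $\pi_*$ commutes with the smooth base change along a quasi-compact presentation $X_0 \to X$ and surjectivity may be tested after that base change. Choosing a vector bundle $\mc E$ on $X$ with $\mc E \twoheadrightarrow \pi_*(\mc G \otimes \calo(n))$ and pulling back, we obtain a surjection $\pi^* \mc E \twoheadrightarrow \mc G \otimes \calo(n)$; twisting by $\calo(-n)$ gives the desired surjection $\pi^* \mc E \otimes \calo(-n) \twoheadrightarrow \mc G$ from a vector bundle on $\bb P(V)$.

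The only point genuinely requiring care is the passage from algebraic spaces to stacks, namely coherence of $i_*$ and $\pi_*$ of coherent sheaves and the existence of a uniform twist $n$ in the relative Serre statement. Both are local for the smooth topology on the target (using flat base change for the higher direct images), and noetherianity together with quasi-compactness of a presentation furnishes the uniform bound, so no input beyond the classical theory is needed.
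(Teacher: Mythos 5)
Your argument is correct and is essentially identical to the paper's proof: both cases proceed by pushing the coherent sheaf forward to $X$ (after a suitable twist in the projective bundle case), covering the pushforward by a vector bundle there, and pulling back, using $i^*i_*\mc F \simeq \mc F$ for the closed immersion and the relative Serre surjection $p^*p_*(\mc F(n)) \twoheadrightarrow \mc F(n)$ for the projection. The paper leaves the descent of these classical facts to stacks implicit (as announced in its conventions), whereas you spell it out, but the mathematical content is the same.
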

\begin{proof} For the first point, if $\mc F$ is a coherent sheaf on $Z$, then there exists a surjection $E \to i_* \mc F$ for a vector bundle $E$ on $X$, and thus a surjection $i^* E \to i^* i_* \mc F = \mc F$. For the second point, denote by $p: \bb P(V) \to X$ the projection. Given a coherent sheaf on $\mc F$ on $\bb P(V)$, since $X$ is noetherian, for big enough $n$, there is a surjection $p^* p_* (\mc F(n)) \to \mc F(n)$ where $\mc F(n) = \mc F \otimes \calo(n)$. We conclude since $p_* (\mc F(n))$ admits a surjection from a vector bundle.
\end{proof}
The following is standard:
\begin{Cor} Suppose that $f: X \to Y$ is a (representable) projective \lci morphism of algebraic stacks, such that $Y$ has the resolution property. Then there is a direct image functor $f_!: V(X) \to V(Y)$. Moreover, given the composition of two projective \lci morphisms $f: X \to Y, g: Y \to Z$, such that $Z$ has the resolution property, then there is a natural transformation $(fg)_! = f_! g_!$.
\end{Cor}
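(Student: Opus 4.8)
The plan is to unwind the definition of a projective \lci morphism, reduce to two elementary cases, and then glue. First I would factor $f$: since $f$ is projective there are a vector bundle $V$ on $Y$ and a closed immersion $i\from X \injects \bb P(V)$ over $Y$, with $p\from \bb P(V)\to Y$ the projection and $f = p\circ i$; because $f$ is \lci and $p$ is smooth, $i$ is automatically a regular closed immersion (see \cite{SGA6}, VIII). By the lemma just above, $\bb P(V)$ has the resolution property since $Y$ does, and then $X$ has it since it is closed in $\bb P(V)$. So it suffices to produce a direct image $i_!\from V(X)\to V(\bb P(V))$ for a regular closed immersion whose target has the resolution property, a direct image $p_!\from V(\bb P(V))\to V(Y)$ for a projective bundle projection, and to check that the composite $f_! := p_!\circ i_!$ is independent, up to canonical isomorphism, of the factorization.

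For the closed immersion I would proceed as in \cite{SGA6}: for a vector bundle $E$ on $X$ the sheaf $i_*E$ has finite Tor-dimension over $\calo_{\bb P(V)}$, so by the resolution property it admits a finite resolution $0\to\mc E_n\to\cdots\to\mc E_0\to i_*E\to 0$ by vector bundles on $\bb P(V)$, and I set $i_!(E):=\sum_j(-1)^j[\mc E_j]$ in $V(\bb P(V))$. The standard comparison-of-resolutions argument (any two resolutions are dominated by a third) makes this independent of the choice up to canonical isomorphism and functorial for isomorphisms of $E$; the horseshoe lemma gives additivity on short exact sequences and the remaining compatibilities of Definition \ref{def:determinantfunc}, so $E\mapsto i_!(E)$ is a determinant functor and by Theorem \ref{Thm:deligne-virtual} descends to an additive functor $i_!\from V(X)\to V(\bb P(V))$.

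For the projection $p$, of relative dimension $r-1$, I would use Lemma \ref{lemma:projbundleformulastacks}: it gives $K_i(\bb P(V))\simeq K_i(Y)^r$, and since an additive functor of virtual categories is an equivalence as soon as it is an isomorphism on $K_0$ and $K_1$, the functor $\prod_{j=0}^{r-1}V(Y)\to V(\bb P(V))$ sending $(W_j)_j$ to $\sum_j p^* W_j\otimes\calo(j)$ is an equivalence. I then define $p_!$ as a quasi-inverse of this equivalence followed by the additive functor $(W_j)_j\mapsto\sum_j W_j\otimes\op{Sym}^j V^\vee$, which on the generators $p^*W_j\otimes\calo(j)$ ($0\le j\le r-1$) realizes $W_j\otimes Rp_*\calo(j)$ via the projection formula. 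Equivalently — and this is the cleanest way to obtain the coherences needed below — both $i_!$ and $p_!$ are the functors induced on fundamental groupoids by the pushforward maps on the $K$-theory of the Quillen $Q$-construction, $Ri_*$ and $Rp_*$ of \cite{Quillen}, under the identification of $V(-)$ with that fundamental groupoid.

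It remains to settle independence of the factorization and the composition law. Any two factorizations of $f$ through projective bundles over $Y$ are dominated by a common one via Segre embeddings, and the resulting functors agree on $K_0$ and $K_1$ by the classical projection and base-change formulas, hence agree as functors of virtual categories (faithfulness is read off $K_1$); so $f_!$ is well defined. For the composition, if $f\from X\to Y$ and $g\from Y\to Z$ are projective \lci with $Z$ having the resolution property, then $Y$ has it as well (it is closed in a projective bundle over $Z$), so $f_!$, $g_!$ and $(fg)_!$ are all defined, and the natural isomorphism $(fg)_!\simeq f_! g_!$ comes from comparing compatible resolutions and factorizations, or once more from the homotopy $R(gf)_*\simeq Rg_*\circ Rf_*$ of $K$-theory pushforwards. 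The main obstacle, and the reason this is only ``standard'', is exactly this bookkeeping of coherences: that $f_!$ is choice-independent up to \emph{canonical} isomorphism and that the composition isomorphisms are themselves natural (and compatible for triple composites, were one to track them). In the stacky setting one must also check that each ingredient — the resolution property, regular immersions, projective bundles, higher direct images — is smooth-local, so that the Cech construction from a smooth presentation glues; all of them are, which makes this automatic, and we suppress the 2-morphism bookkeeping as announced in Section \ref{chapter:excess}.
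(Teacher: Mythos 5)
Your overall architecture (factor $f = p\circ i$ through $\bb P(V)$, treat the regular closed immersion and the bundle projection separately, then compare factorizations) is the same as the paper's, and your treatment of $p_!$ via Lemma \ref{lemma:projbundleformulastacks} matches the paper's exactly. But two of your steps assert precisely the content that needs proving, and one of them rests on a false principle.

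First, for $i_!$ you choose a finite resolution $\mc E_\bullet \to i_* E$ by vector bundles and set $i_!(E) := \sum_j (-1)^j [\mc E_j]$, claiming that comparison of resolutions plus the horseshoe lemma yield canonicity and the axioms of Definition \ref{def:determinantfunc}. At the level of $K_0$ this is standard, but in the virtual category one must produce \emph{canonical} isomorphisms between the classes of any two resolutions, verify a cocycle condition among them, and check condition (c) (compatibility with admissible filtrations) for the horseshoe additivity data; none of this is automatic, and it is exactly the coherence bookkeeping you defer. The paper sidesteps all of it: $i_*$ is an exact functor from vector bundles on $X$ to the exact category $P$ of coherent sheaves on $\bb P(V)$ admitting finite resolutions by vector bundles, hence induces an additive functor $V(X)\to V(P)$ involving no choices; Quillen's resolution theorem (\cite{Quillen}, Corollary 1, p.~25) gives that $K_i(\bb P(V))\to K_i(P)$ is an isomorphism for $i=0,1$, so $V(\bb P(V))\to V(P)$ is an equivalence of Picard categories and $i_!$ is defined by composing with a quasi-inverse. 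Either adopt that route or actually carry out the resolution comparison at the categorical level.

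Second, to compare two factorizations you argue that the resulting functors ``agree on $K_0$ and $K_1$ \ldots hence agree as functors of virtual categories (faithfulness is read off $K_1$).'' This inference is invalid: injectivity on $K_1$ detects faithfulness of a \emph{single} functor, whereas two additive functors between Picard categories inducing the same maps on $\pi_0$ and $\pi_1$ need not be isomorphic, and even when they are, the corollary requires a specified natural transformation rather than a bare existence statement. What is needed is an explicit comparison isomorphism between the two composites, which is what the paper's cited argument (\cite{Riemann-RochAlgebra}, chapter V, Proposition 5.1, redone in Section \ref{section:generalexcess} via the diagonal/Segre trick) supplies. The same remark applies to your isomorphism $(fg)_! \simeq f_! g_!$.
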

\begin{proof} [sketch of proof] The morphism $f$ admits a factorization $X \stackrel{i}\to \bb P(V) \stackrel{p}\to Y$. By the above, all involved objects have the resolution property. We construct a direct image for $i$ and $p$ respectively and leave it to the reader to verify that the composition is independent of factorization, up to canonical natural transformation (this can be done as in \cite{Riemann-RochAlgebra}, chapter V, Proposition 5.1, or what is basically the same, Section \ref{section:generalexcess} of this article). Suppose first that $f$ is a regular closed immersion. Then the direct image $f_*$ is an exact functor from the category of vector bundles on $X$ to the category of coherent sheaves on $Y$ admitting a finite resolution of vector bundles, $P$, and induces an additive functor on the associated virtual categories. By \cite{Quillen}, Corollary 1, p. 25, the natural map $K_i(Y) \to K_i(P)$ is an isomorphism. In particular for $i=0,1$, so the induced functor on virtual categories $V(Y) \to V((P, iso))$ is an equivalence of categories and we define the functor using this. Suppose now that $f$ is a projective bundle projection. By Lemma \ref{lemma:projbundleformulastacks}, there is an equivalence of categories $V(X)^{r} \to V(\bb P(V))$ induced by $(V_i)_{i=0}^{r-1} \mapsto \sum f^* V_i \otimes \calo(i)$ and we define the direct image component wise by $f_! (f^* V_i \otimes \calo(i))= V_i \otimes \op{Sym}^i V^\vee$. This defines the required functor.
\end{proof}
The following statement includes the statement that the projection formula commutes with base change on the level of schemes and algebraic spaces.
\begin{TL} \label{lemma:projformula} Suppose that $f: X \to Y$ is a (representable) \lci projective morphism and that $Y$ has the resolution property. Then there is a projection formula isomorphism
$$f_! (f^! x \otimes y) \simeq x \otimes f_! y.$$
This isomorphism also commutes with Tor-independent base change (see next paragraph) and the composition of two projection formula isomorphisms is the projection formula isomorphism for the composition.
\end{TL}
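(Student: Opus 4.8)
The plan is to deduce everything from the classical projection formula on schemes and algebraic spaces, and to treat a representable morphism of stacks by the usual Cech argument along a smooth presentation $Y_0 \to Y$; the point there is that the construction has to be compatible with the two base changes $s$ and $t$ of the groupoid presentation, which are smooth and hence Tor-independent, so that this descent step is itself a case of the Tor-independent base change statement. Working on schemes and algebraic spaces, I would factor $f$ as $X \stackrel{i}{\to} \bb P(V) \stackrel{p}{\to} Y$ with $i$ a regular closed immersion and $p$ a projective bundle projection, construct the projection formula isomorphism separately for $p$ and for $i$, define it for $f$ as the composite, and then check that the result is independent of the factorization --- a diagram chase of the type of \cite{Riemann-RochAlgebra}, Chapter V, Proposition 5.1, or, what is essentially the same, along the lines of Section \ref{section:generalexcess}.

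For the projective bundle $p$ with $V$ of rank $r$, I would use Lemma \ref{lemma:projbundleformulastacks} to identify $V(\bb P(V))$ with $V(Y)^{r}$ via $(y_i)_{i=0}^{r-1} \mapsto \sum_i p^! y_i \otimes \calo(i)$, under which $p_!$ becomes $(y_i) \mapsto \sum_i y_i \otimes \op{Sym}^i V^\vee$. Since $p^!$ is monoidal there is a natural isomorphism $p^! x \otimes (p^! y_i \otimes \calo(i)) \simeq p^!(x \otimes y_i) \otimes \calo(i)$, so under this identification the functor $p^! x \otimes (-)$ corresponds to $(y_i) \mapsto (x \otimes y_i)$; applying $p_!$ and rearranging with the symmetric monoidal structure gives $p_!(p^! x \otimes y) \simeq \sum_i (x \otimes y_i) \otimes \op{Sym}^i V^\vee \simeq x \otimes \sum_i y_i \otimes \op{Sym}^i V^\vee = x \otimes p_! y$. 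Every isomorphism here is natural in $x$ and $y$ and is built from the monoidal structure, so by Theorem \ref{Thm:deligne-virtual}, applied in each variable separately, they assemble into an isomorphism of biadditive functors.

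For the regular closed immersion $i \colon Z \to Y$, I would recall that $i_!$ is defined through the equivalence $V(Y) \stackrel{\sim}{\to} V(P)$, where $P$ is the exact category of coherent $\calo_Y$-modules admitting a finite resolution by vector bundles, together with the exact functor $i_*$ from vector bundles on $Z$ into $P$. For vector bundles $E$ on $Y$ and $F$ on $Z$ the ordinary projection formula for the affine morphism $i$ supplies a natural isomorphism of sheaves $i_*(i^* E \otimes F) \simeq E \otimes i_* F$, and both sides lie in $P$, since $i_* F$ does (because $i$ is a regular closed immersion) and $P$ is stable under tensoring with vector bundles. This isomorphism is additive on exact sequences in $E$ and in $F$ and compatible with admissible filtrations, so by Theorem \ref{Thm:deligne-virtual} it induces a natural isomorphism of the corresponding biadditive functors on virtual categories; transporting it along $V(Y) \simeq V(P)$ yields the desired $i_!(i^! x \otimes y) \simeq x \otimes i_! y$.

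Finally I would verify the two compatibilities. For composition, using independence of the factorization one reduces to the four cases of composing an immersion or a bundle projection with an immersion or a bundle projection, and each of these unwinds to a compatibility of the underlying sheaf-level isomorphisms, again in the spirit of \cite{Riemann-RochAlgebra}, Chapter V, Proposition 5.1. For Tor-independent base change along $g \colon Y' \to Y$, form the Cartesian square and reduce to the two model cases: in the projective bundle case $\bb P(V) \times_Y Y' = \bb P(g^* V)$ and the splittings of Lemma \ref{lemma:projbundleformulastacks} are strictly compatible with pullback, so the statement is formal; in the closed immersion case, Tor-independence of the square is exactly the condition ensuring that the derived pullback of a finite vector bundle resolution of $i_* F$ is again a resolution --- of the pushforward of the pulled-back bundle --- so that $g^! i_!$ and the corresponding direct image for $f'$ are computed compatibly, while the sheaf-level projection formula isomorphism visibly commutes with pullback. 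I expect the main obstacle to be organizational rather than conceptual: once independence of the factorization is in hand, each remaining assertion reduces on vector bundles and sheaves to a classical identity, which the universal property of the virtual category then propagates; the genuine difficulty, and the reason for the heavy notation, is carrying the equivalences $V(Y) \simeq V(P)$ and the projective bundle splittings coherently through composition and base change at the same time.
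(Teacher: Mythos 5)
The paper states Lemma \ref{lemma:projformula} without proof, and your proposal supplies precisely the argument its surrounding text prescribes: factor $f$ through a projective bundle exactly as in the corollary constructing $f_!$, handle the projective-bundle case via Lemma \ref{lemma:projbundleformulastacks} and the closed-immersion case via the sheaf-level projection formula in the exact category $P$ of sheaves with finite vector-bundle resolutions, propagate everything through Theorem \ref{Thm:deligne-virtual}, and descend to stacks by the Cech argument along a smooth presentation. Your proof is correct and consistent with the paper's framework; it simply fills in details the author treats as standard.
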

For the next statement, recall that Tor-independence, or (cohomological) transversality, of two morphisms between schemes, from $X'$ and $X''$ to $X$ means that the Tor-sheaves $\op{Tor}_i^{\calo_{X}}(\calo_{X'}, \calo_{X''})$ vanish for $i \geq 1$. This is an \'etale local statement and translates to algebraic spaces, and the same condition for a Cartesian diagram of representable morphisms of algebraic stacks has to be interpreted as Tor-independence on a presentation $X_0 \to X$ and the induced morphisms ${X'}_0 = X_0 \times_X X' \to X_0$ and ${X''}_0 = X_0 \times_X {X''} \to X_0$. In this case the construction of the usual base change-isomorphism follows that of the case of algebraic spaces in \cite{MB}, Proposition 13.1.9 (i.e. is computed on a Cech cover), and we obtain:
\begin{TL} \label{lemma:basechange} Suppose that $$\xymatrix{ X \ar[r]^{g'} \ar[d]^{f'} & Y \ar[d]^f \\
X' \ar[r]^g & Y'}$$
is a Tor-independent Cartesian diagram of representable morphisms of stacks with the resolution property, such that $f$ and $f'$ are \lci projective morphisms. Then there is a base change isomorphism $$g^! f_! x \simeq {f'}_! {g'}^! x.$$
The composition of base change isomorphisms is the base change isomorphism of the composition.
\end{TL}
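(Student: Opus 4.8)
The plan is to construct the base change isomorphism on the level of honest vector bundles and coherent sheaves and then descend it to $V(Y)$ by the universal property of the virtual category (Theorem \ref{Thm:deligne-virtual}): a natural transformation of additive functors between virtual categories is the same as, and is determined by, the corresponding natural transformation of the associated determinant functors on $(\mathrm{Vect},iso)$. First I would reduce to the case of schemes and algebraic spaces: choosing a smooth presentation $Y'_0 \to Y'$ and pulling back along the four morphisms of the square turns everything into a diagram of algebraic spaces, and the construction is then made on a Čech cover exactly as in \cite{MB}, Proposition 13.1.9, which is also how the direct image and the base change map recalled before the statement were obtained. So assume $X,Y,X',Y'$ are algebraic spaces. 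Next, factor $f$ through a projective bundle, $X \stackrel{i}\to \bb P(V) \stackrel{p}\to Y$ with $i$ a regular closed immersion. Inserting $\bb P(g^! V) = \bb P(V)\times_Y Y'$ between $X'$ and $Y'$ splits the big square into two Cartesian squares; the lower one (over $\bb P(V)\to Y$) is Tor-independent because $p$ is flat, and then the upper one is Tor-independent too, since $\calo_X\otimes^L_{\bb P(V)}\calo_{\bb P(g^!V)}\simeq \calo_X\otimes^L_Y\calo_{Y'}$ is concentrated in degree $0$ by Tor-independence of the outer square. Hence it suffices to build the isomorphism, compatibly with composition, for a projective bundle projection and for a regular closed immersion, and then paste.

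For a projective bundle projection $p\colon \bb P(V)\to Y$ the isomorphism is essentially formal. By Lemma \ref{lemma:projbundleformulastacks} every object of $V(\bb P(V))$ is, after the chosen splitting, a sum $\sum p^! V_i \otimes \calo(i)$, and $p_!$ sends it to $\sum V_i \otimes \op{Sym}^i V^\vee$. Since $g^!\calo(i)=\calo(i)$ on $\bb P(g^! V)$, since $g^!$ commutes with $p^!$, and since $\op{Sym}^i(g^! V)^\vee = g^!\op{Sym}^i V^\vee$, the additive functors $g^! p_!$ and $p'_!\,{g'}^!$ agree on generators of $V(\bb P(V))$; this identification is the base change isomorphism, and its compatibility with a further pullback is immediate from the same identities.

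For a regular closed immersion $i\colon X\to Y$, Tor-independence of the square does all the work. It forces $i'\colon X'\to Y'$ to be again a regular immersion of the same codimension, it gives $g^! i_*\mc F = i'_*\,{g'}^!\mc F$ for a vector bundle $\mc F$ on $X$, and, more generally, it ensures that the pullback along $g$ of a finite resolution of $i_*\mc F$ by vector bundles on $Y$ is again such a resolution of $i'_*\,{g'}^!\mc F$ on $Y'$. Recall that $i_!$ is defined by transporting through the equivalence $V(Y)\stackrel{\sim}\to V((P,iso))$, where $P\subset\op{Coh}(Y)$ is the subcategory of sheaves admitting a finite vector-bundle resolution (\cite{Quillen}, Corollary~1, p.~25), and similarly for $Y'$. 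The Tor-vanishing says exactly that pullback along $g$ carries $P$ into $P'$ compatibly with these equivalences, whence $g^! i_! \simeq i'_!\,{g'}^!$, again with compatibility with iterated pullback built in.

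Finally, independence of the chosen projective factorization of $f$ runs parallel to the verification already made for the direct image in the corollary above, i.e. along the lines of \cite{Riemann-RochAlgebra}, chapter V, Proposition 5.1, or, what is the same, the method of Section \ref{section:generalexcess}; compatibility with composition of base change maps is obtained by slicing a pasted pair of Cartesian squares. Because a natural transformation of additive functors between virtual categories is determined by its restriction to honest vector bundles (Theorem \ref{Thm:deligne-virtual}), each such equality of natural transformations reduces to the corresponding identity on $K_0$, which is classical (\loccit). I expect the real work to lie not in any single step but in this last bookkeeping: carrying the 2-commutativity through the presentations and checking that the coherence isomorphisms — associativity of pushforward, the splitting in the projective bundle theorem, the resolution identifications for the closed immersion — match at the level of $K_1$-torsors and not merely of $K_0$. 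As elsewhere in the paper, it is the universal property of $V(\mathcal C)$ that keeps this finite and mechanical rather than delicate.
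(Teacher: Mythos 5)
Your proposal is correct and matches the paper's intent: the paper offers no proof of this lemma beyond the remark immediately preceding it, namely that the base change isomorphism is constructed on a Cech cover of a smooth presentation as in \cite{MB}, Proposition 13.1.9, thereby reducing everything to the classical case of schemes and algebraic spaces. Your factorization of $f$ through a projective bundle, the two Tor-independence checks for the resulting squares, the separate treatment of the regular immersion and projective bundle cases, and the descent to the virtual category via Theorem \ref{Thm:deligne-virtual} are exactly the standard construction the paper is invoking (and mirror how $f_!$ itself is built in the Corollary of Section \ref{chapter:excess}), so you have written out in full what the paper delegates to the references.
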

Finally, we will moreover work with a strictly commutative version of the virtual category of vector bundles, defined as follows: \begin{D} \label{Defn:strictlycomm} The objects of $V_{\pm}(X)$ and $V(X)$ are the same. We define \linebreak $\Hom_{V_{\pm}(X)}(A,B)$ as the quotient of $\Hom_{V(X)}(A,B)$ by the relation that two morphisms $h,h':A \to B$ are equal if $h \circ {h'}^{-1} = [-1] \in \Aut{(A)}$. Here, for $A = \calo$, $[-1]$ corresponds to the automorphism of the trivial vector bundle sending the unit section 1 to -1.\end{D}
\begin{remark} It is not difficult to see (\cite{determinant}, 4.9) that the above element is the automorphism of $V$ deduced from the symmetry $V + V' \simeq V + V'$ evaluated at $V=V'$ and then subtracting $V$. So commutativity becomes strict. We also recall (loc. cit. 4.11.a) that one of the main issues with signs is the following: For a distributive functor $\bigodot: V(X) \times V(X) \to V(X)$, there are two ways of constructing the isomorphism $(-V) \odot (-V') \simeq V \odot V'$. These two isomorphisms are only equal up to sign, but become completely canonical in the above category. \end{remark} We give an important example, for this article, of an operation which becomes independent of such choices, in this context.
\begin{Ex} \label{example:lambda-1} Given a vector bundle $E$, put $\lambda_{-1}(E) := \sum (-1)^i \Lambda^i E$, for exterior powers $\Lambda^\bullet$, considered as an object in the virtual category. Then the isomorphism $\lambda_{-1}(E'+E'') \simeq \lambda_{-1}(E') \otimes \lambda_{-1}(E'')$, deduced from $\Lambda^n (E'+E'') \simeq \sum \Lambda^q E' \otimes \Lambda^{n-q} E''$ becomes completely canonical and commutative in the above category. Slightly more generally, for an exact sequence, $0 \to E' \to E \to E'' \to 0$, there is a canonical isomorphism
\begin{equation} \label{lambda1} \lambda_{-1}(E) \simeq \lambda_{-1}(E') \otimes \lambda_{-1}(E'').
\end{equation}
The hidden isomorphism $\Lambda^n (E) \simeq \sum \Lambda^q E' \otimes \Lambda^{n-q} E''$ is the one coming from the filtration $$F^q \Lambda^n E = \hbox{Im} [\Lambda^q E' \otimes \Lambda^{n-q} E \to \Lambda^n E]$$ on $\Lambda^n E$, and the isomorphism $F^q \Lambda^n E/F^{q+1} \Lambda^n E\simeq \Lambda^q {E'} \otimes  \Lambda^{n-q} {E''}$. One verifies that this is also compatible with admissible filtrations. This isomorphism will be used repeatedly in what follows suit. If we could write down the isomorphism (\ref{lambda1}) in a functorial way with sign the main result of this article could probably be refined to the virtual category. \end{Ex}

\section{A rough excess isomorphism} \label{section:roughexcess}
We first give an explicit construction of an excess isomorphism in the special case that we dispose of compatible Koszul resolutions and the general problem in the general situation will be to reduce to this case. We also suppose that all the stacks have the resolution property. \\

Suppose we are given a Cartesian diagram of representable morphisms,
$$\xymatrix{ X \ar[r]^{g'} \ar[d]^{f'} & Y \ar[d]^f \\
X' \ar[r]^g & Y'}$$
with $f$ (and thus $f'$) is a closed immersion. Let $x$ be a virtual vector bundle on $Y$. Suppose we have
two morphisms $\sigma: N^\vee \to \calo_{Y'}, \sigma':
{N'}^\vee \to \calo_{X'}$ defining Koszul-resolutions of $\calo_Y$ and $\calo_{X}$ respectively that are compatible in the sense that we have a morphism $\gamma: g^* N^\vee \to {N'}^\vee$ compatible with the resolutions $g^* \sigma$ and $\sigma'$ in such a way that the natural diagram
$$\xymatrix{g^* N^\vee \ar[r]^{\gamma}  \ar[d] & {N'}^\vee \ar[d] \\
g^* \calo_{Y'} \ar[r] & \calo_{X'}}$$
commutes. This implies that $f$ and $f'$ are both closed regular immersions. \\
Writing $\mc I$ and (resp. $\mc I'$) for the ideals defining the immersions $f: X \to Y$ (resp. $f': X'\to Y'$) and let
$N_f^\vee = \mc I/\mc I^2$ and $N_{f'}^\vee = \mc I'/\mc {I'}^2$ be the conormal bundles of the immersions. By restricting to $X$ via $f'$
we obtain a commutative diagram
$$\xymatrix{{f'}^* g^* N^\vee \ar[r]^{{f'}^* \gamma} \ar[d] & {f'}^* {N'}^\vee \ar[d] \\
{g'}^* N_f^\vee \ar[r] & N_{f'}^\vee} $$
where the vertical morphisms necessarily are isomorphisms. Denote the kernel of $\gamma$ by $\bb E$.
\begin{D} \label{defn:excessbundle}We define the excess bundle $E := \ker [N_f^\vee \to N_{f'}^\vee]$ (cf. \cite{intersection}, Section 6.3. Our definition is however dual to the one given there).
\end{D}
Then ${f'}^* \bb E \simeq E$ so $\bb E$ provides an extension of $E$ to $X'$.  Also, suppose that the virtual vector bundle $x$
extends to a virtual vector bundle $x_{Y'}$ on $Y'$, i.e. there is an isomorphism $r: {f}^*
x_{Y'} \to x$. Then we define an isomorphism by
$$\xymatrix{ \Psi_{x_{Y'}, \sigma, \sigma', r}(x):  g^! f_!(x) & \stackrel{\sigma, x_{Y'}, r}\simeq & \lambda_{-1}(g^* N^\vee) \otimes g^! (x_{Y'})  \\
& \stackrel{\gamma}\simeq &  \lambda_{-1}(\bb E) \otimes  \lambda_{-1}({N'}^\vee) \otimes  g^! (x_{Y'}) \\
& \stackrel{\sigma'}\simeq &  f'_!(\calo_X) \otimes  \lambda_{-1}(\bb E) \otimes  g^! (x_{Y'}) \\
& \simeq & f'_!({f'}^!(\lambda_{-1}(\bb E) \otimes g^! (x_{Y'})))\\
& \simeq & f'_!(\lambda_{-1}(E) \otimes {g'}^! x).}$$ The first
isomorphism is given by the extension $x_{Y'}$ together with the
resolution $\sigma$ and then applying $g^!$. On the second to last
line we use the projection formula once again, and then the
definition of ${f'}^!$ for the last line. We will show that
this isomorphism is independent of the subscripts $x_{Y'}, \sigma, \sigma', r$. Actually, assuming additivity in the data, the obvious comparison immediately gives that the isomorphism does not depend on the extension.
\begin{D} The above isomorphism is the rough excess isomorphism with respect to $x_{Y'}, \sigma, \sigma', r$.
\end{D}

\begin{TL} \label{lemma:closed-base change} Suppose that the above square is Tor-independent. Then the constructed rough excess isomorphism coincides with the base change isomorphism.
\end{TL}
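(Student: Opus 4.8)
The plan is to reduce both isomorphisms to explicit quasi-isomorphisms of Koszul complexes of sheaves and to check that they are assembled from the same elementary pieces in the same order. Since the square is Tor-independent, pulling back the Koszul resolution $\Lambda^\bullet N^\vee \to f_*\calo_Y$ along $g$ again yields a resolution, now of $f'_*\calo_X$ (on a stack this is checked on a smooth presentation, where it is the vanishing of the higher sheaves $\op{Tor}_i^{\calo_{Y'}}(\calo_{X'},\calo_Y)$). Thus $\gamma\colon g^*N^\vee\to{N'}^\vee$ relates two Koszul resolutions of $f'_*\calo_X$ over the identity; its kernel $\bb E$ is locally free with ${f'}^*\bb E = E$ the excess bundle of Definition \ref{defn:excessbundle}, and $E$ vanishes precisely because the square is Tor-independent. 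Hence $\bb E$ vanishes in a neighbourhood of $X$, which is all we use, since $f$ and $f'$ are closed immersions and everything below commutes with shrinking $Y'$, $X'$ to such neighbourhoods; so we may assume $\bb E = 0$ and $\gamma$ an isomorphism. Then $\lambda_{-1}(E)\simeq\calo_X$ and the rough excess isomorphism collapses to an isomorphism $g^!f_!(x)\simeq f'_!({g'}^!x)$. Since both it and the base change isomorphism are independent of the extension $(x_{Y'},r)$, we fix one once and for all, and by compatibility with smooth base change we may assume we work with schemes.

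Next I would unwind the base change isomorphism. For the regular closed immersion $f$ the functor $f_!$ is transported through $V(Y')\xrightarrow{\sim}V((P,iso))$, and under this equivalence $f_!(x)$ is represented by $\Lambda^\bullet N^\vee\otimes x_{Y'}$, a resolution of $f_*x$ via the sheaf-level projection formula $f_*(f^*x_{Y'})\cong f_*\calo_Y\otimes x_{Y'}$ and $r$; the isomorphism $f_!(x)\simeq\lambda_{-1}(N^\vee)\otimes x_{Y'}$ (whose pullback along $g$ is the first step of the rough excess isomorphism) is precisely the alternating-sum isomorphism attached to this resolution. Applying $g^!$ and invoking the first paragraph, $g^!f_!(x)$ is represented by $\Lambda^\bullet g^*N^\vee\otimes g^*x_{Y'}$, still a resolution, of $f'_*{g'}^*x$; likewise $f'_!({g'}^!x)$ is represented via $\sigma'$ by $\Lambda^\bullet{N'}^\vee\otimes g^*x_{Y'}$. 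Tracing the construction of the base change morphism of \cite{MB}, Proposition 13.1.9 — which is computed on a Cech cover and, fed the Koszul resolution, is pulling it back — one checks that under these representatives it is the isomorphism induced by $\Lambda^\bullet\gamma\otimes\mathrm{id}$, followed by the formal repackaging $f'_!(\calo_X)\otimes g^!x_{Y'}\simeq f'_!({f'}^!g^!x_{Y'})\simeq f'_!({g'}^!x)$ using the projection formula and the definition of ${f'}^!$. Compatibility of the base change isomorphism with composition and with the projection formula (Lemmas \ref{lemma:basechange} and \ref{lemma:projformula}) is what reduces the statement to this one elementary comparison.

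Running the same bookkeeping on the rough excess isomorphism, its steps are (with $\bb E = 0$): $g^!$ of the Koszul identification $f_!(x)\simeq\lambda_{-1}(N^\vee)\otimes x_{Y'}$ above; the isomorphism $\lambda_{-1}(g^*N^\vee)\simeq\lambda_{-1}({N'}^\vee)$ of Example \ref{example:lambda-1} attached to $0\to\bb E\to g^*N^\vee\xrightarrow{\gamma}{N'}^\vee\to 0$, which on Koszul complexes is $\Lambda^\bullet\gamma$ term by term; the Koszul identification $\lambda_{-1}({N'}^\vee)\simeq f'_!(\calo_X)$ for $f'$ via $\sigma'$; and then the same two formal steps via the projection formula and ${f'}^!$ that appear in the base change isomorphism. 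As each constituent agrees with the corresponding constituent of the base change isomorphism at the level of complexes of sheaves and the compositions match, the two isomorphisms coincide.

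The main obstacle is the middle step: honestly identifying the base change isomorphism of Lemma \ref{lemma:basechange}, which is defined indirectly on Cech covers after \cite{MB} and through the equivalences $V(-)\simeq V((P,iso))$ that define pushforward along regular immersions, with the explicit "induced by $\Lambda^\bullet\gamma$" description, and verifying that the interposed projection-formula isomorphisms and ${f'}^!$-identifications are literally the same on both sides. This is the sort of unwinding of the functoriality of the Koszul resolution that is notationally heavy but not conceptually hard.
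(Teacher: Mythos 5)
Your overall route is the same as the paper's: unwind both isomorphisms through the Koszul resolutions and reduce the comparison to the compatibility of base change with the projection formula. Two steps need repair, though. First, the localization argument is not valid as written: the isomorphism to be compared lives in $V(X')$, and restriction to an open neighbourhood $U$ of $X$ in $X'$ is not a faithful functor on virtual categories (it is in general not injective on $K_1$), so agreement after shrinking says nothing about the original morphism. Fortunately the step is unnecessary: under Tor-independence $g^*\sigma$ is again a Koszul resolution of $\calo_X$ on $X'$, so $g^*N^\vee$ and ${N'}^\vee$ have the same rank (the common codimension), and the surjection $\gamma$ --- whose surjectivity is already part of what makes the sequence $0\to\bb E\to g^*N^\vee\to{N'}^\vee\to 0$ and hence the rough excess isomorphism meaningful --- is therefore an isomorphism on all of $X'$, so $\bb E=0$ on the nose with no shrinking.

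Second, the step you defer as the ``main obstacle'' is exactly where the paper does something concrete instead of a Cech unwinding, and it should not be left as an obstacle. Since both sides are natural transformations of additive functors, Theorem \ref{Thm:deligne-virtual} reduces the comparison to $x=f^*V'$ with $V'$ an honest vector bundle on $Y'$; this reduction is also what makes your phrase ``represented by the complex $\Lambda^\bullet N^\vee\otimes x_{Y'}$'' literal, which it is not for a virtual $x_{Y'}$. For such $x$ the paper splits the comparison into (i) the square governed by $\gamma$, which commutes because the isomorphism $g^!\lambda_{-1}N^\vee\to\lambda_{-1}{N'}^\vee$ is induced by the commutative diagram with exact columns relating $g^*\mc I\to\mc I'$, $g^*\calo_{Y'}\to\calo_{X'}$ and $g^*f_*\calo_Y\to f'_*{g'}^*\calo_Y$ --- this is the honest identification of the base change map for $\calo_Y$ with $\Lambda^\bullet\gamma$ that you ask for; and (ii) the remaining contour, which is precisely the assertion of Lemma \ref{lemma:projformula} that the projection formula commutes with Tor-independent base change. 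With these two inputs in place of the shrinking step and the deferred identification, your computation closes and agrees with the paper's proof.
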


\begin{proof} Denote by ${g}^! f_! x \stackrel{c_x} \to f'_! {g'}^! x$ the base change isomorphism.  The statement of the lemma is that for an extension $x_{Y'}$ of $x$ to $Y'$,  the outer contour of following diagram
$$\xymatrix{g^! f_! (x) \ar[r]^-\sigma \ar[dr] \ar[dd]^{c_x} & g^! x_{Y'} \otimes g^! \lambda_{-1} N^\vee \ar[r]^\gamma \ar[d]^{\sigma} & g^! x_{Y'} \otimes \lambda_{-1} {N'}^\vee \ar@/{}^{6pc}/[dd]  \ar[d]^{\sigma'} & \\
& g^! x_{Y'} \otimes {g}^! f_! \calo_Y \ar[r]^{g^! x_{Y'} \otimes c_{\calo_Y}} & g^! x_{Y'} \otimes f'_! {g'}^! \calo_Y  \ar[d]  &   \\
f'_! {g'}^! x  & &   \ar[ll] f'_! ({f'}^! {g}^! x_{Y'}) }$$
is commutative. Here the outer right contour connecting ${g}^! f_! x$ and $f'_! {g'}^! x$ is the proposed rough excess isomorphism. First of all, the triangles commute per definition. The middle horizontal morphism is the base change isomorphism for the trivial bundle $\calo_X$ tensored with the virtual bundle ${g^! x_{Y'}}$. The square commutes since it is induced by the natural isomorphisms
$g^! \lambda_{-1} N^\vee \to \lambda_{-1} {N'}^\vee$ and the commutative diagram
$$\xymatrix{g^* \mc I \ar[r] \ar[d] & \mc I' \ar[d] \\
g^* \calo_{Y'} \ar[r] \ar[d] & \calo_{X'} \ar[d] \\
g^* f_* \calo_Y \ar[r] & {f'}_* \calo_{X} \ar@{=}[r] & {f'}_* {g'}^* \calo_Y  }$$
with exact columns and vertical isomorphisms. To show the large inner contour commutes, to identify the additive functors we can assume by Theorem \ref{Thm:deligne-virtual} that $x = f^* V'$ for an actual vector bundle $V'$ on $Y'$. Then the statement is simply compatibility of base change with the projection formula. \end{proof}

\section{Formulation of the main theorem}
Let henceforth "virtual category" and "virtual objects" be substitutes for "strictly commutative virtual category" and "strictly commutative virtual objects" (cf. Definition \ref{Defn:strictlycomm}). The main issue is that we want to apply Example \ref{example:lambda-1} above. Also make the assumption that all the algebraic stacks considered have the resolution property (cf. Section \ref{chapter:excess}, Definition \ref{defn:resolutionprop}). We will also only consider representable morphisms except when considering the deformation to the normal cone where a certain morphism to $\bb P^1$ will in general not be representable. The main result of this article is the following:

\begin{T} \label{thm:excess} Suppose we have a Cartesian square $\mc E$ of $S$-stacks (for some scheme $S$) and representable morphisms
$$\xymatrix{ X \ar[r]^{g'} \ar[d]^{f'} & Y \ar[d]^f \\
X' \ar[r]^g & Y',}$$
that $g$ and $g'$ are arbitrary morphisms and that suppose $f$ and $f'$ are projective \lci morphisms. Then there is a family of natural transformations of additive functors in the virtual category of vector bundles:

$$\Psi_{\mc E}: f'_!\left( \lambda_{-1}(E) \otimes {g'}^!(x) \right) \simeq  g^! f_!(x)$$
where $E$ is the excess bundle. It is unique if we impose the following properties (or conditions):
\begin{enumerate}
    \item \textsc{Stability under Tor-independent base change:}  Consider the following cubical diagram
$$   \xymatrix@!{
 & \widetilde X \ar@{->}[rr]^{\widetilde {g'}} \ar@{->}'[d][dd]^{q''}
   & & \widetilde Y \ar@{->}[dd]^{q}
\\
 \widetilde{X'} \ar@{<-}[ur]^{\widetilde f'} \ar@{->}[rr]^(0.40){\widetilde g}\ar@{->}[dd]^{q'''}
 & & \widetilde{Y'} \ar@{<-}[ur]^{\widetilde f}\ar@{->}[dd]^(0.35){q'}
\\
 & X \ar@{->}'[r][rr]^{g' }
   & & Y
\\
 X' \ar@{->}[rr]^{ g}\ar@{<-}[ur]^{f' }
 & & Y' \ar@{<-}[ur]^{f}
}$$ where the right-hand and left-hand vertical squares are Tor-independent and Cartesian. The upper and lower diagrams are denoted by $\tilde {\mc E}$ and $\mc E$ respectively and are supposed to be as in the introduction. Symbolically we summarize the cube by a morphism of diagrams $Q: \tilde {\mc E} \to \mc E$. Then there is a commutative diagram of natural transformations of additive functors:
$$\xymatrix{\tilde{f}'_!(\lambda_{-1}(\tilde E) \otimes \tilde {g'}^!q^! ) \ar[d]^{q \tilde{g'} = {g'} q''', {q''}^*E = \tilde E} \ar[rr]^{\Psi_{\widetilde{ \mc E}}(q^!)} & & \tilde i^! \tilde f_!q^! \ar[d]^{base~change} \\
\tilde{f}'_!(\lambda_{-1}({q''}^* E) \otimes ({q''}^! {g'}^!)) \ar[d] & & \tilde g^! {q'}^! f_! \ar[dd]^{i q''' = q' \widetilde {g}} \\
\tilde{f}'_! {q''}^! (\lambda_{-1}(E) \ar[d]^{base change} \otimes {g'}^!))  \\
{q'''}^!(f_!(\lambda_{-1}(E)  \otimes {g'}^!))
\ar[rr]^{{q'''}^*\Psi_{\mc E} } & & {q'''}^! g^! f_!.}$$ We
will sometimes write this as $\Psi_{\widetilde {\mc E}} \circ Q^*
\simeq {Q}^* \circ \Psi_{\mc E}.$ In particular there is an isomorphism $\Psi_{\widetilde{ \mc E}}(q^*(x)) \simeq
{q'''}^*\Psi_{\mc E} (x)$.
    \item \textsc{Stability under the projection formula:} There is a commutative diagram
    $$\xymatrix{f'_!\left( \lambda_{-1}(E) \otimes {g'}^!(x \otimes f^! y) \right) \ar[r] \ar[d] & g^! f_!(x \otimes f^! y) \ar[d] \\
    f'_!\left( \lambda_{-1}(E) \otimes {g'}^!(x) \right) \otimes {g}^! y  \ar[r]  & g^! f_!(x)  \otimes g^! y }$$
    where the horizontal isomorphisms are given by excess and the vertical ones are given by the projection formula. \\
    \item \textsc{Normalization:}  Suppose that $f$ is a closed embedding of a Cartier divisor $Y$ in $Y'$, and that $X = X'$. Let
     $$0 \to \calo(-Y) \to \calo_{Y'} \stackrel{\sigma}\longrightarrow \calo_Y \to 0$$
     be the canonical Koszul resolution. Then, whenever $x$ extends to a virtual bundle $x_{Y'}$ on
    $Y'$, $\Psi_{\mc E}$ is given by the rough excess isomorphism:

    \begin{eqnarray*} \label{excess:forcartierdivisors}
    \Psi_{\mc F_{Y'}, \sigma} : g^! f_!(x) & \stackrel{\sigma}\simeq & \lambda_{-1} (g^* \calo(-Y)) \otimes g^! (
    x_{Y'}) \\
    & \stackrel{g = f{g'}, f' = \op{Id}}\simeq & \lambda_{-1} ({g'}^* {f'}^* \calo(-Y)) \otimes {g'}^! {f'}^! x_{Y'}  \\
    & \simeq & \lambda_{-1} (E) \otimes {g'}^! x.
    \end{eqnarray*}
    \item \textsc{Composition:} Suppose we are given the composition of an upper Cartesian diagram $\mc E$ and a lower Cartesian diagram $\mc
    E'$ (giving $\mc E''$):
    $$\xymatrix{X \ar[r]^g \ar[d]^{e'} & Y \ar[d]^e \\
    X' \ar[r]^{g'} \ar[d]^{f'} & Y' \ar[d]^{f} \\
    X'' \ar[r]^{g''}  & Y''  \\}$$
    with associated excess-bundles $E, E'$ and $E''$, the following diagram commutes
    $$\xymatrix{ (f' \circ e')_! (\lambda_{-1}(E'') \otimes {g}^! x) \ar@/{}^{3pc}/[ddddr]^{\Psi_{\mc E''}} \ar[d]^{f' \circ e' = f'  e'} \\
    f'_! e'_! (\lambda_{-1}(E'') \otimes {g}^! x)  \ar[d]_{ 0 \to E \to E'' \to {e'}^* E' \to 0 } \\
    f'_!( \lambda_{-1}(E') \otimes e'_! (\lambda_{-1}(E) \otimes {g}^! x))  \ar[d]^{\Psi_{\mc E}} \\
    f'_!( \lambda_{-1}(E') \otimes {g'}^! e_! (x) \ar[d]^{\Psi_{\mc E'}}) \\
    {g''}^! f_! e_! (x) \ar[r]^{f \circ e = fe} & {g''}^! (f \circ e)_! (x)}$$
    where we use the projection formula on second left up-to-down arrow.
This will be written symbolically as
$$\Psi_{\mc E'}\Psi_{\mc E} = \Psi_{\mc E''}$$
        \item \label{condition:trivialization} {\textsc{Trivialization:} Suppose $f$ is a \lci projective morphism and let $x$ be a virtual vector bundle which admits a trivialization. Then $f'_!\left( \lambda_{-1}(E) \otimes {g'}^!(x) \right)$ and $g^! f_!(x)$ are both canonically trivialized. We demand that the excess isomorphism interchanges these trivializations.}
\end{enumerate}
\end{T}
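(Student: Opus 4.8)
\medskip
\noindent\textbf{Proof strategy.}
The plan is to split the statement into an \emph{existence} part (construct a family $\Psi_{\mc E}$ realizing the excess formula and satisfying (1)--(5)) and a \emph{uniqueness} part (show that (1)--(5) determine it). The organizing idea is that, after enough reductions, $\Psi_{\mc E}$ is forced to be the rough excess isomorphism of Section \ref{section:roughexcess}, and that the deformation to the normal cone --- reviewed in the appendix in the language of algebraic stacks --- is the mechanism both for defining $\Psi_{\mc E}$ in general and for propagating the identity $\Psi=\Psi'$ through the axioms.

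\medskip
\noindent\textbf{Existence.}
First I would reduce to two basic shapes of $f$. A representable projective \lci morphism factors as $X\xrightarrow{\,i\,}\bb P(V)\xrightarrow{\,p\,}Y$ with $i$ a regular closed immersion and $p$ a projective bundle projection; combining this with the desired composition property (4) and with independence of the direct image functor on the factorization (the construction of $f_!$ recalled in Section \ref{chapter:excess}), it suffices to build $\Psi_{\mc E}$ when $f$ is a projective bundle projection and when $f$ is a regular closed immersion. If $f$ is a projective bundle projection it is flat, hence Tor-independent with the arbitrary morphism $g$, the excess bundle vanishes, and I would simply set $\Psi_{\mc E}$ equal to the base change isomorphism of Lemma \ref{lemma:basechange}. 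If $f$ is a regular closed immersion I would run the double deformation to the normal cone on $f$ and $f'$ simultaneously, producing a Cartesian square $\widetilde{\mc E}$ of regular closed immersions over $\bb A^1$ whose fibre over $1$ is $\mc E$ and whose fibre over $0$ is a square $\mc E_0$ in which $f_0,f'_0$ are zero sections of the relevant normal bundles. A zero section carries a canonical global Koszul resolution, namely the Koszul complex on the tautological section, so the rough excess isomorphism of Section \ref{section:roughexcess} applies literally to $\mc E_0$ and is canonical by the independence-of-data argument given there. I would then transport it to $\mc E$ along the two fibre inclusions $\{0\},\{1\}\hookrightarrow\bb A^1$: since the deformation space is flat over $\bb A^1$ these pull back to Tor-independent Cartier-divisor squares, on which Lemma \ref{lemma:closed-base change} identifies rough excess with base change, yielding the comparison. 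Checking that the resulting $\Psi_{\mc E}$ satisfies (1)--(5) then occupies Sections 5--7: for each axiom one exhibits the diagram in question as arising from a single (possibly iterated) deformation and invokes functoriality of that deformation.

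\medskip
\noindent\textbf{Uniqueness.}
Suppose $\Psi,\Psi'$ both satisfy (1)--(5). Two morphisms with the same source and target in the (strictly commutative) virtual category differ by an element of $\Aut$, i.e. of $K_1$, and $K_1$ injects under pullback along projective bundle projections (Lemma \ref{lemma:projbundleformulastacks}), hence along iterated flag bundles; so by the base change property (1) I may check $\Psi=\Psi'$ after any such faithfully flat pullback. Using (4) and the factorization above I would again reduce to $f$ a projective bundle projection and $f$ a regular closed immersion. For a projective bundle projection $\mc E$ is Tor-independent and the excess bundle is zero; base-changing along $\bb P(V)\to Y$ itself via (1) reduces to $g=\op{Id}$, where both sides are canonically trivialized and (5) forces $\Psi_{\mc E}=\op{Id}$, so $\Psi_{\mc E}$ is the base change isomorphism. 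For a regular closed immersion I would pull back to a flag bundle where the relevant normal bundle splits, so that after the deformation to the normal cone the zero-section diagram $\mc E_0$ becomes a composition of Cartier-divisor squares (the Koszul complex of a split bundle being the tensor product of rank-one Koszul complexes, consistent with (4)); property (4) then reduces $\mc E_0$ to the Cartier-divisor case, which property (3) pins down as the rough excess isomorphism. Finally property (1), applied to the fibre inclusions over $0$ and $1$ inside the deformation, shows that $\Psi_{\mc E}$ for a general regular closed immersion is determined by $\Psi_{\mc E_0}$; hence $\Psi=\Psi'$.

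\medskip
\noindent\textbf{Expected main obstacle.}
The hard part will be making precise the assertion, hinted at in the introduction, that the deformation to the normal cone is ``functorial enough''. Concretely I expect the real work to be: (i) setting up the double deformation so that $\widetilde{\mc E}$ is again a Cartesian square of regular closed immersions of stacks with the resolution property, and so that the deformation space, the embedding of $X\times\bb A^1$, the excess bundle and the Koszul data are simultaneously compatible with the base change $g$, with composition of two immersions and with the projection formula; (ii) proving that the two transport isomorphisms obtained through the fibres over $0$ and over $1$ agree --- a statement about the two Cartier divisors $\{0\},\{1\}\subset\bb A^1$ sitting inside the deformation, where Lemma \ref{lemma:closed-base change} and axiom (4) interact; and (iii) carrying all of this out without losing a sign, which is exactly the role of the strictly commutative virtual category of Definition \ref{Defn:strictlycomm}, in which the ambiguities of Example \ref{example:lambda-1} have been eliminated. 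Everything else --- the axiom verifications in the Cartier-divisor and projective-bundle cases --- I expect to be routine diagram chases once the deformation-theoretic machinery is in place.
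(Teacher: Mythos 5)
Your overall strategy coincides with the paper's: factor $f$ into a regular closed immersion followed by a projective bundle projection, handle the projection by Tor-independent base change, handle the immersion by deformation to the normal cone, reduce the model situation to the Cartier-divisor case by the splitting principle, and pin that case down with the normalization axiom. However, two steps as you have written them contain genuine gaps. First, in the projective-bundle uniqueness argument you reduce to $g=\op{Id}$ and then claim that ``both sides are canonically trivialized and (5) forces $\Psi_{\mc E}=\op{Id}$.'' This does not work: condition (e) only constrains $\Psi_{\mc E}$ on virtual bundles $x$ that themselves admit a trivialization, and for general $x$ the object $f_!(x)$ carries no canonical trivialization, so the axiom says nothing. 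What one must show is that a natural automorphism of the functor $f_!$ compatible with the axioms is trivial; the paper does this by a further reduction along relative divisors $\bb P(V')\subset\bb P(V)$ using (b), (d) and the closed-immersion case, descending by induction to $f=\op{Id}$, where the composition axiom (d) forces the resulting automorphism of the identity functor to square to itself and hence vanish. Some argument of this kind is indispensable.

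Second, the comparison between the two fibres of the deformation --- which you rightly flag as the main obstacle but leave unresolved --- is not a consequence of flatness over the base, and your description of the special fibre is too coarse: it is the reducible divisor $\bb P(N\oplus 1)\cup\widetilde Y$, not merely the zero-section square. The restrictions $i_0^!\Psi$ and $i_\infty^!\Psi$ of a putative excess isomorphism on the deformation space are a priori unrelated; the paper relates them by fixing a rational function $\lambda$ on $\bb P^1$ with divisor $(0)-(\infty)$, which yields an isomorphism $\calo_{X'}\simeq\calo_{D'}+\calo_{\bb P(N'\oplus 1)}-\calo_{\bb P(N')}$ of virtual bundles on the deformation space $M'$, and then applies the projection formula together with Tor-independence of the fibre inclusions (regular immersions of equal codimension, Lemma \ref{lemma:transversality}) to express $i_{0,!}\Psi_{\mc E_0}$ through $i_{\infty,!}\Psi_{\mc E_\infty}$ plus terms supported on $D'$ and $\bb P(N')$; those extra terms are exactly what conditions (b) and (e) trivialize, since $D'$ misses the image of $X$. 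This linear-equivalence mechanism is the heart of both the uniqueness proof and the construction of the ``rougher'' excess isomorphism, and it is absent from your sketch. Relatedly, your existence argument also defers to axiom (4) and to independence of the factorization without addressing the latter; the paper must prove it separately (Lemma \ref{lemma:excessindep} and Proposition \ref{prop:excessindep}, comparing two factorizations through their fibre product), since (4) is a property being verified, not yet available.
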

\begin{remark} The last property $(e)$ is a consequence of the constraint that the excess isomorphism should be a natural transformation of additive functors (thus in particular respecting trivializations). We state it as a separate property since it is the only condition in the definition of additive functors which is not trivial to verify using Theorem \ref{Thm:deligne-virtual}.
\end{remark}
It moreover follows (see Proposition \ref{Prop:tor-independentexcess}), that if $E$ is trivial, then this isomorphism is necessarily given by Tor-independent base change. We also record the following corollary of the theorem:
\begin{Cor} \label{cor:excessselfintersection}[Self-Intersection Formula, compare proof of \cite{0cyclesarithmeticschemes}, Lemma 3.2] Let $i: Y \to Y'$ be
a regular closed embedding with $Y'$ being an algebraic stack with the resolution property. Then we have a functorial isomorphism
$$i^! i_! (x) \simeq \lambda_{-1}(N_{Y/Y'}^\vee) \otimes x. $$
\end{Cor}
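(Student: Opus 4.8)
The plan is to obtain this as an immediate application of Theorem~\ref{thm:excess}, fed with one particular Cartesian square. Since $i\colon Y\to Y'$ is a (representable) closed immersion it is a monomorphism, so the diagonal $Y\to Y\times_{Y'}Y$ is an isomorphism and the square
$$\xymatrix{ Y \ar[r]^{\op{Id}} \ar[d]_{\op{Id}} & Y \ar[d]^{i} \\ Y \ar[r]_{i} & Y' }$$
is Cartesian; call it $\mc E$. In the notation of the theorem this is the square with $f=g=i$ and $f'=g'=\op{Id}_Y$. All the hypotheses of Theorem~\ref{thm:excess} are then met: $i$ is a regular closed immersion, hence a projective \lci morphism, $\op{Id}_Y$ is trivially a projective \lci morphism, and $Y$ has the resolution property (by stability of the resolution property under closed immersions) because $Y'$ does.

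Next I would identify the excess bundle of $\mc E$. By Definition~\ref{defn:excessbundle} it is $E=\ker\bigl[N_{Y/Y'}^\vee\to N_{\op{Id}}^\vee\bigr]$, and the conormal bundle of the identity vanishes, so $E=N_{Y/Y'}^\vee$. The natural transformation of additive functors $V(Y)\to V(Y)$ produced by the theorem is then
$$\Psi_{\mc E}\colon\ \op{Id}_!\bigl(\lambda_{-1}(N_{Y/Y'}^\vee)\otimes\op{Id}^!(x)\bigr)\ \simeq\ i^! i_!(x),$$
and composing it with the canonical identifications $\op{Id}_!\simeq\op{Id}\simeq\op{Id}^!$ that come with the functoriality of $V(-)$ and of the pushforward gives exactly the asserted functorial isomorphism $i^! i_!(x)\simeq\lambda_{-1}(N_{Y/Y'}^\vee)\otimes x$. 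Naturality in $x$ requires no separate argument, since $\Psi_{\mc E}$ is by construction a natural transformation of additive functors.

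I do not anticipate a genuine obstacle: the whole content is the observation that the displayed square is Cartesian --- that is, that a closed immersion is a monomorphism --- together with the triviality $N_{\op{Id}}^\vee=0$. The only points that deserve mild care are bookkeeping ones: using the canonical trivializations of $\op{Id}_!$ and $\op{Id}^!$ consistently (cf.\ condition~(\ref{condition:trivialization}) of the theorem), and, should one want an explicit description of the isomorphism rather than merely its existence, unwinding $\Psi_{\mc E}$ through the Composition and Normalization properties applied to the deformation of $i$ to the normal cone --- but neither is needed for the statement as phrased.
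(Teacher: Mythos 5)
Your proposal is correct and is exactly the paper's argument: the paper's proof reads ``take $X = X' = Y$, $f = g = i$, $f' = g' = \mathrm{Id}$ and use that the excess-bundle is just the conormal-bundle,'' which is precisely your square $\mc E$ together with the observation that $N_{\mathrm{Id}}^\vee = 0$. Your additional remarks (the square is Cartesian because a closed immersion is a monomorphism, and the identifications $\mathrm{Id}_! \simeq \mathrm{Id} \simeq \mathrm{Id}^!$) are just the details the paper leaves implicit.
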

\begin{proof} Indeed, take $X = X' = Y$, $f = g = i$, $f' = g' =
\op{Id}$ and use that the excess-bundle is just the conormal-bundle.
\end{proof}
Below we show the uniqueness of the isomorphism in the case of closed immersions. After this we provide a construction suggested by the proof of unicity to construct the isomorphism for closed immersions. We then prove that the isomorphism is necessarily given by base change in the case of projective bundle projections. One then needs to show that for general projective morphisms, the excess isomorphism obtained by a factorization does not depend on choice of factorization and thereby define the isomorphism in general. \\

\section{The case of closed immersions, uniqueness}

The main object of this section is to prove the following theorem:

\begin{T} If $f$ is a closed immersion, then the excess isomorphism, if it exists, is uniquely determined by the conditions $(a) - (e)$ in the theorem.
\end{T}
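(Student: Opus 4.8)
The strategy is to reduce an arbitrary square $\mc E$ with $f$ a closed immersion to the normalization case (condition $(c)$) via the deformation to the normal cone, using the stability conditions $(a)$, $(b)$, $(d)$ to propagate the normalization along the deformation. First I would recall that since $f\colon X\to Y'$ is a closed \lci immersion, it is a regular immersion, and the deformation to the normal cone produces a morphism $M\to \bb P^1$ with special fibre the deformation space $D(f)$, whose zero fibre recovers the embedding $X\times\bb P^1 \hookrightarrow \mathbb{P}(N_f\oplus 1)$ (a Cartier-type "cone" situation) and whose fibre over $\infty$ recovers the original $f$. Pulling back the whole Cartesian square $\mc E$ along this deformation produces a Cartesian square over $\bb P^1$; condition $(a)$ says the excess isomorphism for this big square restricts, over each fibre, to $\Psi$ for the fibre square. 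So it suffices to pin down $\Psi$ for the deformed square and then specialize.

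Next I would set up the base case inside the deformation. Over the zero-section the relevant immersion is the inclusion of the zero-section into (the projective completion of) the normal bundle; this is handled by the projective bundle theorem and condition $(a)$ (Tor-independent base change), since the zero-section of a vector bundle is Tor-independently transverse to the natural projections in the right way, combined with the self-intersection computation that is forced by $(c)$ on the Cartier-divisor pieces appearing in the Koszul complex of the zero-section of $N_f$. Concretely, the zero-section factors as an iterated sequence of Cartier divisor embeddings locally, and one uses $(d)$ (composition) to glue the excess isomorphisms of these pieces together; each elementary piece is governed by $(c)$. This determines $\Psi$ for the special fibre square uniquely. Then, because the ambient deformation $M\to\bb P^1$ is flat away from problems and the two fibres ($0$ and $\infty$) are Tor-independently embedded Cartier divisors in $M$, condition $(a)$ applied to the inclusion of each fibre forces $\Psi_{\mc E_0}$ and $\Psi_{\mc E_\infty}$ to be the restrictions of one and the same $\Psi_{\mc E_M}$. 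Since $\Psi_{\mc E_0}$ is determined, $\Psi_{\mc E_M}$ is determined on the part that matters, hence $\Psi_{\mc E_\infty}=\Psi_{\mc E}$ is determined.

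A couple of technical reductions thread through this. First, using condition $(b)$ (projection formula) together with Theorem \ref{Thm:deligne-virtual}, one reduces to checking the isomorphism of additive functors on generators, i.e. one may take $x = f^! V'$ for a genuine vector bundle $V'$ on $Y'$ when convenient, and more importantly one reduces the virtual bundle $x$ to the trivial bundle twisted appropriately — after which condition $(e)$ (trivialization) nails down the isomorphism on the nose for trivialized objects, and additivity propagates it. Second, condition $(a)$ must be applied not just to the $0$ and $\infty$ fibres but along the whole family, which is why one needs the deformation to the normal cone to be "functorial enough" — exactly the cumbersome point the introduction warns about: one must check that forming $D(f)$ commutes with the base change $g$ up to the canonical identifications, so that the pulled-back deformation of $f$ is the deformation of $f'$, and that the excess bundle behaves compatibly (${q''}^* E = \tilde E$ in the notation of $(a)$).

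The main obstacle I expect is precisely the bookkeeping in the deformation-to-the-normal-cone step: verifying that the big square over $\bb P^1$ is genuinely Cartesian and fibrewise Tor-independent where needed, that the excess bundles match up under the pullbacks, and that conditions $(a)$ and $(d)$ can be chained consistently (no sign or coherence ambiguity beyond the allowed overall sign). The self-intersection/zero-section computation itself is essentially forced once $(c)$ and $(d)$ are in hand — the real work, and the reason the argument is "writable but cumbersome", is checking the compatibilities of the deformation with base change at the level of stacks so that $(a)$ legitimately applies. Once those compatibilities are in place, uniqueness follows formally: any two excess isomorphisms satisfying $(a)$–$(e)$ agree on the special fibre (by $(c)$, $(d)$, $(e)$, and the projective bundle theorem via $(a)$), hence agree on the deformation (by $(a)$ applied to the fibre inclusions), hence agree on the original square.
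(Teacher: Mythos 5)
Your overall strategy --- deform to the normal cone, determine the excess isomorphism in the model situation of the zero section of $\bb P(N\oplus 1)$ by chaining the Cartier-divisor normalization (c) through the composition axiom (d), and then transport back to the original square via the base-change axiom (a) --- is the same as the paper's. But there is a genuine gap at the central step: the transfer of information between the two fibres of the deformation. You argue that since $\Psi_{\mc E_0}$ and $\Psi_{\mc E_\infty}$ are both restrictions (via condition (a)) of one and the same isomorphism on the total space of the deformation, determining one determines the other "on the part that matters." That does not follow: restriction to a fibre is far from faithful, and knowing $i_0^!\Psi_{\mc E}$ places no constraint whatsoever on $i_\infty^!\Psi_{\mc E}$ unless you supply a mechanism comparing the two fibres inside $M'$. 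The paper's mechanism is the linear equivalence $(0)\sim(\infty)$ on $\bb P^1$: a rational function $\lambda$ yields an isomorphism $\calo(X')\simeq\calo(\bb P(N'\oplus 1))\otimes\calo(D')$ on $M'$, hence via the Koszul resolutions an identity $\calo_{X'}\simeq\calo_{D'}+\calo_{\bb P(N'\oplus 1)}-\calo_{\bb P(N')}$ in the virtual category; tensoring $\Psi_{\mc E}\Pi^!$ with this and using $i_{0,!}i_0^!(-)\simeq\calo_{X'}\otimes(-)$ (projection formula, together with $\Pi\circ i_0=\op{Id}$) rewrites $i_{0,!}\Psi_{\mc E_0}$ in terms of $i_{\infty,!}\Psi_{\mc E_\infty}i_\infty^!\Pi^!$ plus terms supported on $D'$ and $\bb P(N')$, which are then pinned down by conditions (b) and (e) precisely because $D'$ does not meet the image of $X$. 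This is the step that actually makes the two fibres communicate, and your plan omits it.

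A secondary gap is in the model situation itself. You propose to factor the zero section "locally" as an iterated sequence of Cartier divisor embeddings and glue with (d). The excess isomorphism is a global natural transformation, so a local factorization proves nothing without a descent argument. The paper instead pulls back along a flag bundle $p\colon\op{Gr}_{1,m,N}\to Y$ --- a faithful operation on virtual categories because it is injective on $K_1$ --- so that $p^*N/\mc M$ acquires a global full flag, and then runs an induction on $\rk E$ (the rank-zero case being Tor-independent, hence fixed by (a)), concluding by the splitting principle. You need this, or an equivalent global splitting device, before the reduction to condition (c) is legitimate.
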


\begin{proof} First of all, denote by $\mc E$ the Cartesian diagram $$\xymatrix{ \bb P^1_{X} \ar[r]^{G'} \ar[d]^{F'} & \bb P^1_{Y} \ar[d]^F \\
M' \ar[r]^{G} & M}$$ where $M'$ and $M$ are the deformations to the
normal cone of $f'$ and $f$ respectively. Denote
by $\mc E_0$ and $\mc E_\infty$ the two following diagrams:
\begin{equation} \label{modeldiagrams} \xymatrix{X \ar[r]^{g'} \ar[d]^{f'} & Y \ar[d]^f & X \ar[d]^{\tilde f'} \ar[r]^{g'} &  Y \ar[d]^{\tilde f} \\
X' \ar[r]^g & Y' & \bb P(N' \oplus 1)  \ar[r]^{\tilde g} &
\bb P(N \oplus 1)}\end{equation} where $\tilde{f},\tilde{f}'$ are the canonical zero-sections with projections
$\pi'$ and $\pi$, $\tilde g$ being given by the morphisms induced by an
inclusion of vector bundles $N' \subseteq {g'}^*N.$   In the following we
abuse notation a bit to show unicity, but only in the sense that was used in the formulation of the theorem. Thus, for example, $\Psi_{\mc E}$ denotes an excess isomorphism satisfying the properties of the theorem for the diagram $\mc E$ and for a functor $F$, $F \Psi_{\mc E}$ denotes the image of the excess isomorphism under $F$. We have embeddings $i_0: \mc E_0 \to \mc E$ and $i_\infty: \mc E_\infty \to \mc E$
 and a projection $\Pi: \mc E \to \mc E_0$, satisfying $\Pi \circ i_0 = \op{Id}$. By property (a) of the theorem we see that
 $\Psi_{\mc E_0} i_0^! = i_0^! \Psi_{\mc E}$ and $\Psi_{\mc E_\infty} i_\infty^! = i_\infty^! \Psi_{\mc E}$. Also, applying the natural
transformations $\op{Id} \simeq i_{0}^! \Pi^!$ and $\op{Id} \simeq \Pi_! Ri_{0,!}$ one sees that the functor $\Psi_{\mc E_0}$ is
determined by the functor $i_{0,!} i_0^! \Psi_{\mc E} \Pi^!$. Fixing a
rational function $\lambda$ on $\bb P_{\bb Z}^1$ with divisor $(0) - (\infty)$
defines an isomorphism
\begin{equation}\label{eq:linearequiv} \calo(X') \stackrel{\lambda}\simeq \calo(P(N' \oplus 1) \cup D') \simeq
\calo(\bb P(N' \oplus 1))\otimes \calo(D')
\end{equation}
 of line bundles on $M'$. Here $\bb P(N' \oplus 1)$ and $D'$ are the two components of the blow-up at infinity in the deformation to the normal cone, intersecting in $\bb P(N')$, and the image of $X$ does not intersect that of $D'$ (see Appendix). This rational section is defined up to sign, which will not be seen on the level of the strictly commutative virtual category. We have natural Koszul resolutions
\begin{eqnarray*} 0 \to  \calo(-X') \to  \calo \to  \calo_{X'} \to 0 \\
0 \to  \calo(-D') \to  \calo \to  \calo_{D'} \to 0 \\
0 \to  \calo(-\bb P(N' \oplus 1)) \to  \calo \to  \calo_{\bb P(N' \oplus 1)} \to 0
\end{eqnarray*}
and a resolution
\begin{eqnarray*}
0 \to  \calo(-D')\otimes \calo(-\bb P(N' \oplus 1)) \to
\calo(-D')\oplus \calo(-\bb P(N' \oplus 1)) \to \calo \to \calo_{\bb P(N')} \to 0
\end{eqnarray*}
which together define an isomorphism, via (\ref{eq:linearequiv}) (see also \cite{Riemann-RochAlgebra}, Proposition 4.4)

\begin{eqnarray*} 0 & \simeq & (1-\calo(-X')) - (1-\calo(-D')) - (1-\calo(-\bb P(N' \oplus 1)) \\ & & + (1-\calo(-\bb P(N' \oplus 1))\otimes(1-\calo(-D')) \\
& \simeq & \calo_{X'} - \calo_{D'} - \calo_{\bb P(N' \oplus 1)} +
\calo_{\bb P(N')}
\end{eqnarray*}

and hence an isomorphism
$$ \calo_{X'} \simeq  \calo_{D'} + \calo_{\bb P(N' \oplus 1)} - \calo_{\bb P(N')}.$$
Via the projection formula this gives an equality
\begin{eqnarray*} i_{0, !} \Psi_{\mc E_0} & \simeq & i_{0, !} i_0^!  \Psi_\mc E \Pi^! \simeq \calo_{X'} \otimes  \Psi_\mc E \Pi^! \\
& \simeq & \calo_{\bb P(N' \oplus 1)} \otimes \Psi_\mc E \Pi^!  + \calo_{D'} \otimes \Psi_\mc E \Pi^! -  \calo_{\bb P(N')} \otimes \Psi_\mc E \Pi^!
\end{eqnarray*}

Since $D'$ does not intersect the image of $\Pi^*$, the conditions (b) and (e) of the theorem determine $\calo_{D'} \otimes \Psi_\mc E \Pi^!$ and $\calo_{\bb P(N')} \otimes \Psi_\mc E \Pi^!$ and thus $i_{0, !} \Psi_{\mc E_0}$ is determined by $$\calo_{\bb P(N' \oplus 1)} \otimes \Psi_\mc E \Pi^! = i_{\infty, !} i_\infty^! \Psi_\mc E \Pi^!=  i_{\infty, !} \Psi_{\mc E_\infty} i_\infty^! \Pi^! .$$
We recall from Lemma 4.5, chapter V, \cite{Riemann-RochAlgebra} (see the proof for this slightly more refined statement):

\begin{TL} \label{lemma:transversality} Let $F: P \to M$ be a regular embedding, and let $\Phi: Y \to M$ be a representable morphism, and consider the fiber square
$$\xymatrix{ X \ar[r]^f \ar[d]^\phi & Y \ar[d]^\Phi \\
P \ar[r]^F & M}.$$ If $f$ is a regular embedding of the same codimension as $F$, then this square is Tor-independent.
\end{TL}
 Note also that by symmetry, the same conclusion holds
with $\Psi$ and $\phi$ in place of $F$ and $f$.
Now, the diagrams that arise for application of condition (a) are the following:
$$\xymatrix {X \ar[r] \ar[d] & \bb P^1_X \ar[d] & X \ar[d] \ar[r] &  \bb P^1_X \ar[d] &  Y \ar[r] \ar[d] & \bb P^1_Y \ar[d] & Y \ar[r] \ar[d] & \bb P^1_Y \ar[d] \\
X' \ar[r] & M' & \bb P(N' \oplus 1)  \ar[r] &  M' & Y' \ar[r] & M
& \bb P(N \oplus 1)  \ar[r] &  M} $$ The morphisms $\bb P^1_X
\hookrightarrow M', \bb P^1_Y \hookrightarrow M$ are both regular
by Theorem 4.5, chapter IV, \cite{Riemann-RochAlgebra}, all
diagrams Cartesian and codimension is preserved so we can apply
the above lemma. Hence all diagrams are Tor-independent and we can
apply (a).
\\ Thus we are reduced to showing uniqueness for the diagram $\mc
E_\infty$. We proceed by induction on the dimension of $E$. In
case the dimension of $E$ is 0, then indeed the diagram is
Tor-independent and the isomorphism is fixed by Tor-independent
base change, which is a consequence of condition (a). In the case $\rk E > 0$, consider the flag variety $p: G = \op{Gr}_{1,m, N} \to Y$ parameterizing
flags $L \subseteq M \subseteq N$, with $L, M$ of rank $1,m$ and
$m = \op{rk} N'$. Then $p^!: V(Y) \to V(G)$ is faithful (this follows from injectivity on the level of $K_1$, which follows as in the classical case from the projective bundle case, cf. \cite{SGA6}, VI, section 4), and an easy verification shows that by the Tor-independent base change property we can assume our diagram is of the form $$\xymatrix{\op{Gr}_{1,m,{g'}^* N} \ar[r]^{h'} \ar[d] & \op{Gr}_{1,m, N} \ar[d] \\
\bb P({h'}^* \mc M) \ar[r] & \bb P(p^* N)}$$ where $\mc L \subset
\mc M \subset p^* N$ is the universal flag on $G$. Suppose we can
filter the vector bundle $p^*N/\mc M$ by a maximal flag on $G$ which is then in particular a
flag of $p^* N$ including $M$. Then we can compose our big diagram as a composition of smaller diagrams which
are either Tor-independent or codimension 1-cases like in (c). Here we notice that $\bb P(p^* N)$ is a projective bundle over $\op{Gr}_{1,m,N}$ and so any virtual bundle actually extends as in (c). By (d)
this does not depend on the choice of flag and the assertion of the splitting principle (cf. Section \ref{DRR1-section:splittingprinciple} in \cite{DRR1}) is that this isomorphism descends from a maximal flag variety and we conclude . \\\\
\section{The case of closed immersions, rougher excess and existence} The previous
section gave a recipe for the construction of the \linebreak excess isomorphism, which we spell out. Let $M$ (resp. $M'$) be the deformation to the normal cone of $f$ (resp. $f'$). Denote by $i_D, i_{D \cap \bb P(N \oplus 1)}$ (resp. $i_{D'}, i_{D' \cap \bb P(N' \oplus 1)}$) the
closed immersions of $D$ and $D \cap \bb P(N \oplus 1)$ (resp. $D'$ and $D' \cap \bb P(N' \oplus 1)$) in $M$ (resp. $M'$). Also denote by $\pi, \pi', \widetilde {\pi}$ and $\widetilde{\pi'}$ the natural projective bundle projections of the various $\bb P(? \oplus 1)$, and consider the diagram
given by a cubical diagram as in (a). By the universal property of blow-ups (\cite{intersection}, Appendix B.6.9) we obtain commutative diagram
$$   \xymatrix{
 & \bb P^1_{\widetilde X} \ar@{->}[rr]^{\widetilde G'} \ar@{->}'[d][dd]^{Q''}
   & & \bb P^1_{\widetilde Y} \ar@{->}[dd]^{Q}
\\
 \widetilde{M}'
 \ar@{<-}[ur]^{\widetilde F'} \ar@{->}[rr]^(0.40){\widetilde G'}\ar@{->}[dd]^{Q'''}
 & & \widetilde{M} \ar@{<-}[ur]^{\widetilde F}\ar@{->}[dd]^(0.35){Q'}
\\
 & \bb P^1_X \ar@{->}'[r][rr]^{G }
   & & \bb P^1_Y
\\
 M' \ar@{->}[rr]^{ G'}\ar@{<-}[ur]^{F' }
 & & M \ar@{<-}[ur]^{F}
}$$
where all the squares are Cartesian, except possibly the
front and back vertical ones. We construct a sort of preliminary excess isomorphism
for the diagram $\mc E_\infty$ considered in (\ref{modeldiagrams}). If $\xi$ (resp. $\xi'$) denote $\pi^* N(1)$ (resp. ${\pi '}^*  {N'}(1)$) on $\bb P(N \oplus 1)$
(resp. $\bb P(N' \oplus 1)$), we have a short exact sequence
\begin{equation}\label{sequence:quotientbundle}0 \to \bb E \to {{\tilde g}^*} \xi^\vee \to (\xi')^\vee \to 0
\end{equation}
 where $\bb E$ is an extension of the excess bundle of the diagram, more precisely ${\pi'}^* E(-1)$. Then we have a canonical Koszul resolution, obtained from the regular section of $\xi$ determined by $\calo(-1) \to \pi^* (N \oplus 1) \to p^* N$,
\begin{equation}\label{sequence:Koszulres}0 \to \wedge^n \xi^\vee \to \ldots \to \xi^\vee \to \calo_{\bb P(N \oplus 1)} \to \tilde f_* \calo_Y \to 0.
\end{equation}
For the construction, we work with a vector bundle $V$ first be a vector bundle on $X$. Since tensoring with
a vector bundle preserves exactness, there is a Koszul resolution:
\begin{equation}\label{sequence:Koszulresgens}\wedge^\bullet \xi^\vee \otimes \pi^* x \to
\tilde f_* \calo_Y \otimes \pi^* x = \tilde f_* (V).
\end{equation}


Applying $\tilde{g}^!$ to this we obtain \begin{eqnarray} \label{isomodelsituation} \tilde{g}^! \tilde f_! V & = & \tilde{g}^! \left( \sum (-1)^i \wedge^i \xi^\vee \otimes \pi^! V \right) \\
& = & \left(\sum (-1)^i \wedge^i \tilde{g}^* \xi^\vee \otimes \tilde{g}^! \pi^! V \right) \nonumber \\
& = & \lambda_{-1} (\tilde{g}^* \xi^\vee) \otimes {\pi'}^! g^! V \nonumber \\
& = & \lambda_{-1} ((\xi')^\vee) \otimes \lambda_{-1}(\bb E) \otimes {\pi'}^!   g^! V \nonumber \\
& = & \tilde f'_!\left(\lambda_{-1}(E) \otimes g^! V \right). \nonumber
\end{eqnarray}
The isomorphism from the third and fourth line comes from
(\ref{sequence:quotientbundle}), the isomorphism between the fourth
and the fifth come from a Koszul-resolution and the projection formula similar to that of
(\ref{sequence:Koszulres}). This is the rough excess isomorphism already exhibited in Section \ref{section:roughexcess}. \\
In the case of a general diagram $\mc E$ $$ \xymatrix{X \ar[r]^{g'} \ar[d]^{f'} & Y \ar[d]^{f} \\
X' \ar[r]^{g} & Y' }$$
the isomorphism $\Psi_\mc E$ is defined in the following fashion, using the notation
of the previous section:
\begin{eqnarray*}  i_{0, !} g^! f_! (V) \\  \simeq & i_{0, !} g^! f_! i_0^! \Pi^! (V)  \\
 \simeq & i_{0, !} g^! i_0^! F_! \Pi^!(V) \\
 \simeq & i_{0, !} i_0^! G^! F_! \Pi^!(V) \\
 \stackrel{\lambda}\simeq & i_{\infty, !} i_\infty^! G^! F_! \Pi^!(V) + \\ & i_{D', !} i_{D'}^!G^! F_! \Pi^!(V) - \\
& i_{D' \cap P(N' \oplus 1), !}i_{D' \cap \bb P(N' \oplus 1)}^! G^! F_! \Pi^!(V) \\
 \simeq & i_{\infty, !} i_\infty^! G^! F_! \Pi^!(V) \\
 \simeq & i_{\infty, !} \tilde g^! \tilde f_!(V) \\
 \simeq & i_{\infty, !} \tilde f'_!(\lambda_{-1}(E) \otimes g^!(V))
\end{eqnarray*}
The term $i_{D', !} i_{D'}^! G^! F_! \Pi^!(V)$ is isomorphic by Tor-independent base change to \linebreak $i_{D', !} i_{D' }^! {\Pi}^! g^! f_!(V)$ and is
canonically trivialized since the intersection of $D'$ and $D' \cap \bb P(N' \oplus 1)$ with $Y$ is empty. The second isomorphism is the rough excess isomorphism
of (\ref{isomodelsituation}).
Applying $\Pi_!$ on both sides gives the required isomorphism, since
$\Pi i_{\infty} \tilde f' = f'$ and $(\Pi i_{0})_{X'}
= \op{Id}_{X'}$. We claim this "rougher excess isomorphism" satisfies the properties of the theorem. The compatibility with the projection formula in condition (b) is established in a similar way and essentially follows from the naturality of the construction and compatibility with base change, and the interested reader will have no problem verifying it with what follows below. We need to verify the other properties.
\begin{Prop} Let $$   \xymatrix@!{
 & \widetilde X \ar@{->}[rr]^{\widetilde {g'}} \ar@{->}'[d][dd]^{q''}
   & & \widetilde Y \ar@{->}[dd]^{q}
\\
 \widetilde{X'} \ar@{<-}[ur]^{\widetilde f'} \ar@{->}[rr]^(0.40){\widetilde g}\ar@{->}[dd]^{q'''}
 & & \widetilde{Y'} \ar@{<-}[ur]^{\widetilde f}\ar@{->}[dd]^(0.35){q'}
\\
 & X \ar@{->}'[r][rr]^{g' }
   & & Y
\\
 X' \ar@{->}[rr]^{ g}\ar@{<-}[ur]^{f' }
 & & Y' \ar@{<-}[ur]^{f}
}$$
be a commutative cube as in condition (a) of Theorem \ref{thm:excess}. Then the rougher excess isomorphism satisfies the conclusion of \ibid.
\end{Prop}

\begin{proof} This will essentially be by functoriality of the blow-up construction. Keep the notation as introduced. In the cube in the introduction of this section the fiber over $0$ is the diagram we start with, whereas the fiber at $\infty$ is,

$$   \xymatrix{
 & \widetilde X \ar@{->}[rr]^{\tilde g} \ar@{->}'[d][dd]^{q''}
   & & \widetilde Y \ar@{->}[dd]^{q}
\\
 \bb P(\widetilde {N}' \oplus 1)
 \ar@{<-}[ur]^{\tilde f'} \ar@{->}[rr]^(0.40){\tilde g'}\ar@{->}[dd]^{\widetilde q'''}
 & & \bb P(\widetilde {N} \oplus 1) \ar@{<-}[ur]^{\tilde f}\ar@{->}[dd]^(0.35){\widetilde q'}
\\
 & X \ar@{->}'[r][rr]^{g }
   & & Y
\\
 \bb P(N' \oplus 1) \ar@{->}[rr]^{ \widetilde g'}\ar@{<-}[ur]^{\widetilde f' }
 & & \bb P(N \oplus 1) \ar@{<-}[ur]^{\widetilde f}
}$$
minus some unwanted factors. The full fiber over $\infty$ would include the factors $D, D', \widetilde D, \widetilde D'$, which don't meet the images of $Y, X,
\widetilde Y$ and $\widetilde X$ respectively. Let's denote by $i_0, i_0', i_0''', \tilde i_0, \tilde i_0', \tilde i_0'''$ the inclusions
of $Y, Y', X', \widetilde Y, \widetilde Y'$ and $\widetilde X'$ in
$\widetilde M'$ and $\widetilde M$ respectively. Also denote by
$i_\infty, i_\infty', i_\infty''', \widetilde i_\infty, \widetilde
i_\infty', \widetilde i_\infty'''$ (resp. $i_{\widetilde D}$ and
$i_{\widetilde D'}$) the inclusions over $\infty$ of $Y, \bb P({N}
\oplus 1), \bb P({N'} \oplus 1), \widetilde Y, \bb P(\widetilde {N}
\oplus 1), \bb P(\widetilde {N'} \oplus 1)$ (resp. $\widetilde D'$
and $\widetilde D$) in $\bb P^1_Y, M, M', \bb P^1_{\widetilde Y},
\widetilde M, \widetilde M'$ (resp. $\widetilde M$ and $\widetilde
M'$). Also introduce, for the purpose of this section, the natural projections $\Pi: \bb P^1_Y \to Y$ and
$\overline \Pi: \bb P^1_{\widetilde Y} \to \widetilde Y$.

Then we find that the following diagram is commutative:
$$\xymatrix @C=6pc {\overline g^! \overline f_! q^! \ar[r]^{base~change} \ar[d]^{\overline \Pi
\overline i_0 = \op{Id}} & \overline g^! {q'}^! f_! \ar[r]^{q' \tilde g = g q'''}  &  {q'''}^! g^! f_!  \ar[d]^{\Pi i_0 = \op{Id}} \\
\ar[d]^{base~change} \overline g^! \overline f_!  \overline i_0^! \overline \Pi^! q^!  &  &  {q'''}^! g^! f_! i_0^! \Pi^! \ar[d]^{base~change} \\
\ar[d]^{\overline G \overline{i_0'''} = \overline{i_0'} g }
\overline g^! \overline {i_0'}^! \overline F_! \overline \Pi^! q^!
\ar[r]^{\Pi Q = q \tilde \Pi} & \overline g^! \overline {i_0'}^!
\overline F_! {Q}^! \Pi^! \ar[d]_{base~change} & {q'''}^! i^! {i_0'}^! F_! \Pi^! \ar[dd]^{G i_0'''= i_0' g} \\
 \overline {i_0'''}^! \overline G^! \overline  F_! \overline \Pi^! q^! \ar[d]^{\Pi Q = q \overline \Pi} & \overline g^! \overline {i_0'}^! {Q'}^! F_! \Pi^! \ar[ur]_{Q' i_0' \overline g = i_0'  g q'''}\\
\ar[r]^{base~change} \overline {i_0'''}^! \overline G^! \overline
F_! Q^! \Pi^! & \ar[r]^{Q' \overline G \overline {i_0'''} = G i_0'''
q'''} \overline {i_0'''}^! \overline G^! {Q'}^! F_! \Pi^! & {q'''}^!
{i_0'''}^! G^!  F_! \Pi^! }$$ This follows from Lemma \ref{lemma:basechange}, which asserts that composition of base changes is base change itself.

We apply $\overline i_{0, *}'''$ to the above diagram and continue: \\
$$\xymatrix @C=9pc {\overline i_{0, *}''' \overline {i_0'''}^! \overline G^! \overline F_! \overline \Pi^! q^!  \ar[d]^{projection~formula} \ar[r]^{\hbox{see above}} &
\overline i_{0, !}''' {q'''}^! i_{0, !}'''  G^! F_! \Pi^! \ar[d]^{{base~change}} \\
\calo_{\overline X'} \otimes \overline G^! \overline F_! \overline \Pi^! q^! \ar[d]^{\lambda} & {Q'''}^!  i_{0, !}''' {i_{0}'''}^!  G^! F_! \Pi^! \ar[d]^{\lambda + {projection~formula}} \\
\left (\overline{K} \otimes
\overline G^! \overline F_! \overline \Pi^! q^! \right) \ar[r] \ar[d] & {Q'''}^!\left( K  \otimes G^! F_! \Pi^! \right) \ar[d] \\
\calo_{\bb P(\overline N' \oplus 1))} \otimes \overline G^! \overline F_!
\overline \Pi^! q^! \ar[d]^{projection~formula} \ar[r] &
{Q'''}^! \left (\calo_{\bb P( N' \oplus 1))} \otimes G^! F_! \Pi^! \right )
\ar[d]^{projection~formula}
 \\
\overline {i_\infty'''}_! \overline {i_\infty'''}^! \overline G^! \overline F_! \overline \Pi^! q^! \ar[r] \ar[d] & {Q'''}^! {i_\infty'''}_! {i_\infty'''}^! G^! F_! \Pi^! \ar[d] \\
\overline {i_\infty'''}_! g^! f_! q^! \ar[r] & \overline
{i_\infty'''}_! {q'''}^! g^! f_! .} $$
Here $\overline K = \calo_{\bb P(\overline N' \oplus 1)} + \calo_{\overline D} -
\calo_{\bb P(\overline N' \oplus 1) \cap \overline D}$ and $K = \calo_{\bb
P( N' \oplus 1)} + \calo_{ D} - \calo_{\bb P( N' \oplus 1) \cap D}$. The upper square in this diagram is commutative by our choice of $\lambda \in k(\bb P^1_{\bb Z})$ and by verifying (cf. Lemma \ref{lemma:projformula}) that the base changes involved commute with the projection formula. In the next square, the induced isomorphism induces a commutative diagram of isomorphisms:

$$\xymatrix{\calo_{\overline X'} \ar[r] \ar[d]^\lambda & {Q'''}^! \calo_{X'} \ar[d]^\lambda\\
\calo_{\bb P(\overline N'+1)} + \calo_{\overline D} - \calo_{\bb
P(\overline N'+1)) \cap \overline D} \ar[r] & {Q'''}^!(\calo_{\bb P(
N'+1)} + \calo_{ D} - \calo_{\bb P( N'+1) \cap D}).}$$ Moreover,
in the second line there is an induced isomorphism

$$\calo_{\bb P(\overline N'+1)} \simeq {Q'''}^! \left (\calo_{\bb
P( N'+1)} \right )$$ and thus also an induced isomorphism of
superfluous terms (see Lemma \ref{lemma:coneterms}):
$$\calo_{\overline D} - \calo_{\bb
P(\overline N'+1) \cap \overline D} \simeq {Q'''}^!\left (\calo_{
D} - \calo_{\bb P( N'+1) \cap D} \right ).$$ We conclude that the next square is also commutative. \\ Finally, the lower square is commutative for the same reason as the other first square above, i.e. by Lemma \ref{lemma:basechange} cited. Applying $\overline \Pi_!$ as in the definition of our morphism we obtain the commutativity, modulo showing commutativity in the model situation. \\
We are left to consider the cube
$$   \xymatrix{
 & \widetilde X \ar@{->}[rr]^{\widetilde {g'}} \ar@{->}'[d][dd]^{q''}
   & & {\widetilde Y} \ar@{->}[dd]^{q}
\\
 \bb P(\widetilde{N}' \oplus 1)
 \ar@{<-}[ur]^{\widetilde f'} \ar@{->}[rr]^(0.40){\widetilde g}\ar@{->}[dd]^{q'''}
 & &  \bb P(\widetilde{N} \oplus 1) \ar@{<-}[ur]^{\widetilde f}\ar@{->}[dd]^(0.35){q'}
\\
 &  X \ar@{->}'[r][rr]^{g'}
   & &  Y
\\
 \bb P(N' \oplus 1) \ar@{->}[rr]^{g}\ar@{<-}[ur]^{f' }
 & & \bb P(N \oplus 1)  \ar@{<-}[ur]^{f}
}$$ with the notation of the left diagram of (\ref{modeldiagrams}). Denote by $$\xi, \xi', \widetilde \xi, \widetilde \xi'$$ the
analogues of the bundles already introduced on $$\bb P(N \oplus 1), \bb P(N' \oplus
1), \bb P(\widetilde N \oplus 1), \bb P(\widetilde N' \oplus
1)$$ respectively, and let $\bb E$ and $\widetilde {\bb E}$ denote the natural extensions of the excess bundles. The proposition is that
$$\xymatrix{\tilde{f}'_!(\lambda_{-1}(\tilde E) \otimes \tilde g'^!(q^! x)) \ar[d] \ar[r] & \tilde g^! \tilde f_!(q^!(x)) \ar[d] \\
\tilde{f}'_!(\lambda_{-1}({q''}^* E) \otimes ({q''}^! {g'}^! x)) \ar[d] & \tilde g^! {q'}^! f_!(x) \ar[dd] \\
\tilde{f}'_! {q''}^! (\lambda_{-1}(E) \ar[d] \otimes {g'}^! x)) &  \\ 
{q'''}^!(f_!(\lambda_{-1}(E)  \otimes {g'}^! x)) \ar[r] &  {q'''}^!
g^! f_!(x)}$$ is a commutative diagram, where the horizontal morphisms are the already constructed candidates for the excess isomorphisms in (\ref{isomodelsituation}). We show it is commutative by breaking it up into smaller pieces. Consider:
$$\xymatrix  {\widetilde g^! \widetilde f_! q^! \ar[r]^{base~change} \ar[d]^{resolution} & \widetilde g^! {q'}^! f_! \ar[r]^{q' \widetilde g = g q'''} &  {q'''}^! g^! f_! \ar[d]^{resolution}\\
\widetilde g^! (\lambda_{-1} (\widetilde \xi^\vee) \otimes \widetilde
\pi^! q^!) \ar[r] \ar[d] &
\ar@{=}[ld] \lambda_{-1} (\widetilde g^! \widetilde \xi^\vee) \otimes \widetilde g^! \widetilde \pi^! q^! \ar[r] & {q'''}^! g^! (\lambda_{-1}(\xi^\vee) \otimes \pi^!) \ar[d] \\
\lambda_{-1}(\widetilde g^* \widetilde \xi^\vee) \otimes \widetilde g^! \widetilde \pi^! q^! \ar[rr] &
&  \lambda_{-1}({q'''}^* g^* \xi^\vee) \otimes {q'''}^! g^! \pi^! }$$
and
$$
\xymatrix {\ar[d]^{0 \to \widetilde {\bb E} \to \widetilde {g}^* \tilde {\xi}^\vee \to \tilde{\xi'}^\vee \to 0}
\lambda_{-1}(\widetilde g^* \widetilde \xi^\vee) \otimes \widetilde g^! \widetilde \pi^! q^! \ar[r]
& \ar[d]^{0 \to \bb E \to g^* \xi^\vee \to  {\xi'}^\vee \to 0} \lambda_{-1}({q'''}^* g^* \xi^\vee) \otimes {q'''}^! g^! \pi^! \\
\ar[d]^{projection~formula} \lambda_{-1}(\widetilde {\xi'}^\vee) \otimes \lambda_{-1}(\widetilde{\bb E})
\otimes \widetilde g^! \widetilde \pi^! q^! \ar[r] & \ar[d]^{projection~formula} {q'''}^! \left ( \lambda_{-1}({\xi'}^\vee) \otimes \lambda_{-1}(\bb E)  \otimes g^! \pi^! \right ) \\
 \widetilde f_! \left (\lambda_{-1}(\widetilde E) \otimes \widetilde {g'}^! q^! \right ) \ar[d]^{{q''}^* E = \widetilde E} &   {q'''}^! \left (f_!(\lambda_{-1}(E) \otimes {g'}^!) \right) \\
 \widetilde f_! \left (\lambda_{-1}({q''}^* E) \otimes {q''}^! {g'}^! \right )
 \ar[ur]^{base~change}}$$
 defined in the way indicated. The first lower diagram commutes by general nonsense; the isomorphisms are just given by certain natural transformations. To see why the second upper diagram commutes, consider the diagram
$$\xymatrix{\label{sequence:quotientbundles}0 \ar[r] & {q'''}^* \bb E \ar[r] \ar[d] & {q'''}^* g^* \xi^\vee \ar[r] \ar[d] & {q'''}^* (\xi')^\vee \ar[r] \ar[d] &  0 \\
0 \ar[r] & \widetilde {\bb E} \ar[r] &  \widetilde g^* \widetilde \xi^\vee \ar[r] &  (\widetilde \xi')^\vee \ar[r] & 0} $$
where all the vertical maps are isomorphisms. Indeed, all the maps exist by functoriality, and they are isomorphisms by the assumption on Tor-independence. The remaining diagrams are commutative again by applying Lemma \ref{lemma:basechange}.
\end{proof}

\begin{Prop}\label{Prop:composition} Let $e: Y \to Y', f: Y' \to Y''$ be two regular closed immersions, and $g: X'' \to Y''$ an arbitrary representable morphism, with associated diagrams
$\mc E, \mc E'$ and big diagram $\mc E''$,

$$\xymatrix{X \ar[r]^{g''} \ar[d]^{e'} \ar @{} [dr] |{\mc E}
& Y \ar[d]^e \\
X' \ar[d]^{f'} \ar[r]^{g'} \ar @{} [dr] |{\mc E'}
 & Y' \ar[d]^f \\
X'' \ar[r]^g & Y''.}$$ We also suppose $e'$ and $f'$ are regular closed immersions. Then $\Psi_{\mc E''}$ is the
composition of $\Psi_{\mc E'}$ and $\Psi_{\mc E}$ in the sense of Theorem \ref{thm:excess}, condition (d).
\end{Prop}

\begin{proof} By a deformation to the normal cone argument, as above, we can
suppose that our immersions are of the form

$$\xymatrix{ X \ar[r]^{g''} \ar[d]^{e'} & Y \ar[d]^e \\
\bb P_X(N' \oplus 1) \ar[r]^{g'} \ar[d]^{f'} & \bb P_Y(N \oplus 1) \ar[d]^f \\
\bb P_X(M' \oplus 1) \ar[r]^g & \bb P_Y(M \oplus 1).}$$
Denote by $p: \bb P_Y(M \oplus 1) \to Y, p': \bb P_{X}(M' \oplus 1) \to X, \pi:\bb P_Y(N \oplus 1) \to Y$ and $\pi': \bb P_{X}(N' \oplus 1) \to X$
the natural projections. Define $$N^\bot := \ker[M^\vee \to N^\vee] = (M/N)^\vee, {N'}^\bot := \ker[{M'}^\vee \to {N'}^\vee] = (M'/N')^\vee.$$
As before, the morphism $\calo(-1) \to p^* M \oplus 1 \to p^* (M)$ defines a regular
section of $p^* M(1)$ which vanishes exactly at $Y$, and in the same
way we get a regular section of the vector bundle $p^* (M/N)(1)$
which vanishes exactly at $\bb P_Y(N \oplus 1)$ (see
\cite{intersection}, Appendix B. 5.6). We have the following
commutative diagram with exact columns and lines
\begin{align}
\xymatrix{ & 0 \ar[d] & 0 \ar[d] & 0 \ar[d] & \\
0 \ar[r] & \bb E' \ar[r] \ar[d] & {g}^* p^* N^\bot(-1) \ar[r] \ar[d] & {p'}^* {N'}^\bot(-1) \ar[d] \ar[r] &  0 \\
0 \ar[r] & \bb E'' \ar[r] \ar[d] & {g}^* p^* M^\vee(-1) \ar[r] \ar[d] & {p'}^* {M'}^\vee(-1)\ar[r] \ar[d] & 0 \\
0 \ar[r] & \bb E \ar[r] \ar[d] & {g}^* p^* N^\vee(-1) \ar[r] \ar[d] &
{p'}^*
{N'}^\vee(-1) \ar[r] \ar[d]& 0 \\
& 0 & 0 & 0 } \label{xymatrix:bigsquare}
\end{align} Here $\bb E, \bb E', \bb E''$ denote vector bundles
on $\bb P(M' \oplus 1)$ which extend the excess-bundles $E, E'$ and $E''$
respectively, as before. Using the canonical Koszul resolutions, the proposition is that the following diagram is commutative:

\begin{flushleft}
$$\def\objectstyle{\scriptstyle}
\def\labelstyle{\scriptstyle} \vcenter{\scalebox{0.9}[1]{\xymatrix @-0.5pc @C=0.5pc @R=1pc{(f' \circ e')_!
(\lambda_{-1} (E'') \otimes {g''}^! (V)) \ar[r] \ar[d] \ar @{} [ddr] |{\mathbf{(A)}}   &
\lambda_{-1}( \bb E'') \otimes \lambda_{-1}
({p'}^! {M'}^\vee(-1)) \otimes (g'' \circ p')^! (V)) \ar[d] \\
f'_! (e'_! (\lambda_{-1}( {e'}^! E') \otimes \lambda_{-1} ( E) \otimes
{g''}^! (V))) \ar[d] & \lambda_{-1}( \bb E') \otimes \lambda_{-1}( \bb E) \otimes \lambda_{-1}
({p'}^! {M'}^\vee(-1)) \otimes (g'' \circ p')^! (V)) \ar[d] \\
f'_! (\lambda_{-1}(E') \otimes e'_!(\lambda_{-1}(E) \otimes {g''}^! (V)) ) \ar[d] \ar[r] \ar @{} [dr] |{\mathbf{(B)}} &
\lambda_{-1}(\bb E') \otimes \lambda_{-1}({p'}^* {N'}^\bot(-1)) \otimes \lambda_{-1}({p'}^* {N'}^\vee(-1))\otimes \lambda_{-1}(\bb E) \otimes ({p'}^! {g''}^! V) \ar[ddd]   \\
f'_! \left (\lambda_{-1}(E') \otimes \lambda_{-1}({{\pi'}^*
N'}^\vee(-1)) \otimes \lambda_{-1} ({\pi'}^*E) \otimes {\pi'}^! {g''}^! (V) \right) \ar[d] \ar@/{}_{1pc}/[ur] \ar @{} [dr] |{\mathbf{(C)}}  & \\
f'_! \left (\lambda_{-1}(E') \otimes  \lambda_{-1}({g'}^* \pi^*
N^\vee(-1)) \otimes {g'}^! \pi^! (V) \right ) \ar[d] \ar@/{}^{1pc}/[dr] \ar @{} [dr] |{\mathbf{(D)}} &
 \\
f'_! \left (\lambda_{-1}(E') \otimes  {g'}^! e_! (V) \right ) \ar[d] \ar[r] \ar @{} [dr]|-{\mathbf{(E)}} & \lambda_{-1}({p'}^* {N'}^\bot(-1)) \otimes \lambda_{-1}(\bb E') \otimes \lambda_{-1}(g^* {p}^* {N}^\vee(-1)) \otimes ({p'}^! {g''}^! V) \ar[d]
 \\
\left (\lambda_{-1}({p'}^* {N'}^\bot(-1)) \otimes \lambda_{-1}(\mathbb{E}') \otimes \lambda_{-1}(g^* p^* N^\vee(-1)) \otimes g^! p^! V \right ) \ar[d]  \ar@/{}_{0.5pc}/[ur] & \left (\lambda_{-1}(g^* p^* N^\bot(-1)) \otimes \lambda_{-1}(g^! p^! N^\vee(-1)) \otimes g^! p^! V \right )  \ar@/{}_{0.5pc}/[dl] \ar[d] \ar @{} [dl] |{\mathbf{(G)}} \\
 \left (\lambda_{-1}(g^! p^! M^\vee(-1)) \otimes   g^! p^! (V) \right ) \ar[r] \ar @{} [uur] |{\mathbf{(F)}} & g^! (f \circ e)_! (V)} }.} $$
\end{flushleft}
A few comments are in order. Since there is a Koszul resolution determined by $N^\vee(-1) \to \calo_{\bb P_Y(N \oplus 1)}$ of
$Y$ on $\bb P_Y(N \oplus 1)$ the virtual bundle $e_!(V)$ has an extension to $\bb P_Y(M \oplus 1)$ given by
$\lambda_{-1}(p^* N^\vee(-1)) \otimes p^! V$ and the left arrow of diagram $\mathbf{(E)}$ is defined using this. The composition of the left arrows of
the diagrams $\mathbf{(B)}, \mathbf{(C)}$ and $\mathbf{(D)}$ constitute the isomorphism determined by $\mc E$ and finally the composition of the leftmost arrows of
$\mathbf{(E), (F)}$ and $\mathbf{(G)}$ give the isomorphism determined by $\mc E'$. \\
Moreover $\mathbf{(F)}$ commutes since the isomorphism (\ref{lambda1}) is compatible with filtrations and the diagram (\ref{xymatrix:bigsquare}) and the composition of all the rightmost downwards arrows is the isomorphism determined by $\mc E''$ for the same reason. The diagrams $\mathbf{(C)}$ and $\mathbf{(G)}$ commute because of naturality of the Koszul resolution. We refrain from giving all the details of the fact that the other diagrams commute. The interested reader can however easily verify that they do using that all the Koszul resolutions involved are compatible in an obvious sense using the below lemma (a version of this appears already in the Chow-categorical context in \cite{Franke-unpublished}, section 1.3, and the below is inspired by this), which after an inspection takes care of $\mathbf{(A)}$ and  $\mathbf{(B)}$. The other diagrams commute for similar, albeit slightly more involved, reasons. For this, suppose we have vector bundles $V' \subseteq V \subseteq W$ ($V'$ possibly empty) with quotient vector bundles and consider the sequence of inclusions $$\bb P(V' \oplus 1) \to \bb P(V \oplus 1) \to \bb P(W \oplus 1).$$ Denote by $p_{V'}, p_V$ and $p_W$ the canonical projections. On $\bb P(W \oplus 1)$, we have a regular section of $p_W^* (W/{V'})(1)$ (resp. $p_W^* (W/V)(1)$) vanishing on $\bb P(V' \oplus 1)$ (resp. $\bb P(V \oplus 1)$ (cf. \cite{intersection}, B.5.6) related via the natural surjection \linebreak $p_W^* (W/{V'})(1) \to p_W^* (W/V)(1)$. On the kernel of this surjection, $p_W^* (V/{V'})(1)$, restricted to $\bb P(V \oplus 1)$ we have mutatis mutandis a regular section vanishing on $\bb P(V' \oplus 1)$. Notice that per definition we have an exact sequence
\begin{equation} \label{Koszulcompata} 0 \to (W/{V})^\vee \to( W/{V'})^\vee \to (V/{V'})^\vee \to 0.
\end{equation}
\begin{TL}
In the above situation we have a commutative diagram
$$\xymatrix{\bb P(V' \oplus 1) \ar[r]^i \ar[d]^{p_{V'}} & \bb P(V \oplus 1) \ar[r]^j \ar[dl]^{p_V} & \bb P(W \oplus 1) \ar@/{}^{1pc}/[dll]^{p_W} \\
X }$$
where $i$ and $j$ are the natural inclusions. Let $x$ be a virtual vector bundle on \linebreak $\bb P(W \oplus 1).$ Then the claim is that the diagram

$$\xymatrix { \lambda_{-1}(p_W^* (V/{V'})^\vee(-1)) \otimes \lambda_{-1}(p_W^* (W/V)^\vee(-1)) \otimes x \ar[d]^{\Phi} \ar[r]^-{(\ref{Koszulcompata})} & \lambda_{-1}(p_W^*(W/{V'})^\vee(-1)) \otimes x \ar[d]^{\Phi} \\
i_!(\lambda_{-1}(p_V^* (V/V')^\vee(-1)) \otimes i^! x  )
\ar[r]^{\Phi} &  (ji)_!((ji)^! x)}$$
commutes. Here $\Phi$ denotes the respective isomorphisms determined by the regular sections described above.
\end{TL}
\begin{proof} We can assume the virtual bundles are trivial, the isomorphism is obtained by tensoring with $x$ and applying the projection formula. Suppose first that $W/V$ is a line bundle. Then we have exact sequences, for $i > 0$, \begin{equation} \label{eq:exactlambda} 0 \to \Lambda^i (V/V')^\vee(-i) \to \Lambda^i (W/V')^\vee(-i) \to \Lambda^{i-1} (V/V')^\vee(-i+1) \otimes (W/V)^\vee(-1) \to 0, \end{equation}
 compatible with the various Koszul resolutions. The claim follows from a consideration of these resolutions together with the fact that the isomorphism $$\lambda_{-1}((W/V)^\vee(-1)) \otimes \lambda_{-1}((W/V')^\vee(-1)) \simeq \lambda_{-1}((V'/V)^\vee(-1))$$ from Example \ref{example:lambda-1} is induced by (\ref{eq:exactlambda}). The same type of argument applies if $V/V'$ is a line bundle. Suppose now that we are given a vector bundle $V'' \subseteq V'$ with vector bundle quotient $V'/V''$. Then we have a sequence of inclusions $$\bb P(V'' \oplus 1) \hookrightarrow \bb P(V' \oplus 1) \hookrightarrow \bb P(V \oplus 1) \hookrightarrow \bb P(W \oplus 1).$$ The obvious comparison gives that the statement in the lemma holds for a composition of inclusions if it holds for the other three pairs of inclusions.
By the above we can thus conclude by the splitting principle (cf. Section \ref{DRR1-section:splittingprinciple} in \cite{DRR1}).
\end{proof}
\end{proof}
\begin{Prop} \label{prop:reductiontolinebundles} Consider the "rough" excess isomorphism in Section \ref{section:roughexcess}, thus in particular the case in condition (c). Then that isomorphism coincides with the constructed isomorphism.
\end{Prop}
\begin{proof} We have already remarked in the definition that it is not difficult to see the isomorphism does not depend on the extension of the virtual bundle. By the last point of Proposition \ref{prop:deformsection} the Koszul resolution given in the construction of the "rough" excess isomorphism is trivialized on $D$ and $D'$, and deforms to the Koszul resolution in the model situation. By the argument of unicity for closed immersions, together with the established fact that the constructed excess isomorphism is stable under such transformations, we are done.
\end{proof}
\begin{Prop} Condition $(e)$ holds for the rougher excess isomorphism.
\end{Prop}
\begin{proof} It is not difficult to show that the trivializations interchange if and only if they interchange in the model situation obtained after a deformation to the normal cone. Denote $i: \bb P(N) \to \bb P(N \oplus 1)$ and $i': \bb P(N') \to \bb P(N' \oplus 1)$ the natural inclusions. Since $f^! i_! \calo_{\bb P(N)}$ is canonically trivialized, we can replace the trivialized virtual bundle with this one. Since $g^! i_! \calo_{\bb P(N\oplus 1)} \simeq {i'}_! \calo_{\bb P(N'\oplus 1)}$, by condition $(b)$ the upper square of the diagram of excess isomorphisms $$\xymatrix{ {f'}_! (\lambda_{-1}(E) \otimes {g'}^! (x \otimes f^! i_! \calo_{\bb P(N)})) \ar[r] \ar[d] &  g^! f_! (x \otimes f^! i_! \calo_{\bb P(N)}) \ar[d] \\
{f'}_! (\lambda_{-1}(E) \otimes {g'}^! (x)) \otimes {i'}_! \calo_{\bb P(N')} \ar[r] \ar[d] & g^! {f}_!(x) \otimes {i'}_! \calo_{\bb P(N')} \ar[d] \\
\lambda_{-1}({\xi'}^\vee) \otimes \lambda_{-1}({\pi'}^! E) \otimes {i'}_! \calo_{\bb P(N')} {g^! \pi^! x} \ar[r] \ar[d] & \ar[d] \lambda_{-1}(g^* \xi^\vee) \otimes {g^! \pi^! x} \otimes {i'}_! \calo_{\bb P(N')} \\
{i'}_! {i'}^! ((\lambda_{-1}({\xi'}^\vee) \otimes \lambda_{-1}({\pi'}^* E))\otimes {g^! \pi^! x}) \ar[r] & {i'}_! {i'}^! (\lambda_{-1}(g^* \xi^\vee) \otimes {g^! \pi^! x})}$$
commutes. Here $x$ is any virtual vector bundle, the lower middle square is by definition of the excess isomorphism in the model situation, and the lower square is obtained by applying the projection formula with respect to ${i'}$. We show that the trivializations are interchanged on the level of $\bb P(N')$. Since $\bb P(N)$ (resp. $\bb P(N')$) does not meet the image of $Y$ (resp. $X$) in $\bb P(N \oplus 1)$ (resp. $\bb P(N' \oplus 1)$), the sections determined by the given Koszul resolutions are everywhere non-vanishing. The statement now follows from the following claim: Suppose that
$$0 \to V' \to V \to V'' \to 0$$ is an exact sequence of vector bundles. Also suppose that ${V'}$ (and thus $V$) admits an everywhere non-vanishing section $\sigma$. Then $\lambda_{-1}({V'}^\vee)$ and $\lambda_{-1}({V}^\vee)$ are both canonically trivialized by the Koszul complex and these trivializations are interchanged by the isomorphism $$\lambda_{-1} (V^\vee) \simeq \lambda_{-1}({V'}^\vee) \otimes \lambda_{-1}({V''}^\vee).$$
For this claim, consider the following diagram where $V^\vee \to \calo$ and ${V'}^\vee \to \calo$ are given by the dual sections,
$$\xymatrix{
 & 0 \ar[d] & 0 \ar[d] & 0 \ar[d] &  \\
0 \ar[r] & {V''}^\vee \ar[d] \ar[r] & ({V}/\calo)^\vee \ar[d] \ar[r] & \ar[r] \ar[d] ({V'}/\calo)^\vee \ar[r] & 0 \\
0 \ar[r] & {V''}^\vee \ar[d] \ar[r] & V^\vee \ar[d] \ar[r] & \ar[r] {V'}^\vee \ar[d] & 0 \\
0 \ar[r] & 0 \ar[d] \ar[r] & \calo \ar[r] \ar[d] & \ar[r] \calo \ar[d] & 0 \\
 & 0  & 0  &  0 &  }$$
Here by assumption $(V/\calo)^\vee$ and $(V'/\calo)^\vee$ are vector bundles.
Compatibility with filtration gives a commutative diagram of isomorphisms
$$\xymatrix{ \lambda_{-1}(V^\vee) \ar[r] \ar[d]  & \ar[d] \lambda_{-1}(\calo) \otimes \lambda_{-1}((V/\calo)^\vee) \\
\lambda_{-1}({V'}^\vee) \otimes \lambda_{-1}({V''}^\vee) \ar[r] & \lambda_{-1}(\calo) \lambda_{-1}({V'}^\vee/\calo)\otimes \lambda_{-1}({V''}^\vee)}$$
where the rightmost isomorphism is given by $$\lambda_{-1}((V/\calo)^\vee) \simeq \lambda_{-1}((V'/\calo)^\vee)\otimes \lambda_{-1}((V'')^\vee)$$ tensored with the identity morphism between $\lambda_{-1}(\calo)$ and itself. But the trivialization of the two objects are given by $\lambda_{-1}(\calo) \simeq 0$, and are thus interchanged.
\end{proof}
\begin{Cor} The constructed isomorphism of functors is an isomorphism of determinant functors, and thus induces an isomorphism of additive functors.
\end{Cor}
\begin{proof} By Theorem \ref{Thm:deligne-virtual}, we have to verify that the properties of Definition \ref{def:determinantfunc} are interwoven in the above construction. The only non-obvious one is property (b). But this follows from property (a) and the above proposition.
\end{proof}
We thus conclude the demonstration in the case of a regular closed immersion.
\end{proof}
\section{Projective bundle projections - uniqueness and existence} \label{section:Projbundle}
The case of projective bundle projections is more elementary. We start with:
\begin{Prop} \label{Prop:tor-independentexcess} Suppose that the diagram in the theorem is Tor-independent. Then the excess isomorphism is necessarily that of Tor-independent base change.
\end{Prop}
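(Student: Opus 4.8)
The plan is to run the explicit construction of $\Psi_{\mathcal E}$ from Section \ref{section:roughexcess} and Section 6 under the extra hypothesis of Tor-independence and observe that every ingredient then degenerates to a base change isomorphism. First note that Tor-independence of $\mathcal E$ forces the excess bundle to vanish canonically: writing $f=p\circ i$ with $i$ a regular closed immersion into a projective bundle and $p$ the (flat) projection, the $p$-part of the base change is automatically Tor-independent, so by cancellation of Tor-independence the $i$-part is Tor-independent too, and for a Tor-independent square of closed immersions the two conormal bundles in Definition \ref{defn:excessbundle} agree (cf. \cite{intersection}, \S 6.3); hence $E$ (computed from the $i$-part of such a factorization) is the zero bundle. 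Thus $\lambda_{-1}(E)\simeq\calo$ canonically and what has to be shown is that $\Psi_{\mathcal E}$ equals the base change isomorphism $c_{\mathcal E}$ of Lemma \ref{lemma:basechange}.

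Treat first the case where $f$ is a closed immersion. Here $\Psi_{\mathcal E}$ is assembled by deformation to the normal cone out of: canonical identifications (coming from $\Pi i_0=\mathrm{Id}$ and the analogous adjunction-type isomorphisms), Tor-independent base change isomorphisms, the linear-equivalence isomorphism $\lambda$ which removes the $D'$-terms (canonically trivialized since $D'$ and $D'\cap\mathbb P(N'\oplus 1)$ miss the locus where the relevant sheaves live), and the rough excess isomorphism $(\ref{isomodelsituation})$ of the model diagram $\mathcal E_\infty$. When $E=0$ one has $N'={g'}^{*}N$, so $\mathcal E_\infty$ is the Cartesian square obtained by pulling the zero section $Y\hookrightarrow\mathbb P(N\oplus 1)$ back along $\tilde g$; since the pulled-back morphism $X\hookrightarrow\mathbb P(N'\oplus 1)$ is again a zero section, hence a regular immersion of the same codimension, Lemma \ref{lemma:transversality} shows $\mathcal E_\infty$ is Tor-independent, and it carries the compatible zero-section Koszul resolutions ($\gamma$ is an isomorphism because $\mathbb E={\pi'}^{*}E(-1)=0$ in $(\ref{sequence:quotientbundle})$). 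Therefore Lemma \ref{lemma:closed-base change} applies and identifies the rough excess isomorphism of $\mathcal E_\infty$ with its base change isomorphism. Feeding this back through the deformation and using transitivity of base change (Lemma \ref{lemma:basechange}) to collapse the resulting chain of base change and identification isomorphisms to a single one, one obtains $\Psi_{\mathcal E}=c_{\mathcal E}$.

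For a general projective \lci $f$, factor $f=p\circ i$ as above, so $\mathcal E$ is the vertical composite of an $i$-square and a $p$-square, both Tor-independent. For the $p$-square (a projective bundle projection, for which the excess bundle is in any case zero) the excess isomorphism is the base change isomorphism, and for the $i$-square it is base change by the previous paragraph; by condition (d) of Theorem \ref{thm:excess}, $\Psi_{\mathcal E}$ is the composite of these two, and by Lemma \ref{lemma:basechange} so is $c_{\mathcal E}$, whence $\Psi_{\mathcal E}=c_{\mathcal E}$.

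The heart of the matter is the second paragraph: one must verify that running the Section-6 construction with Tor-independent input really produces nothing beyond base change. The two substantive inputs are that $\mathcal E_\infty$ is Tor-independent (which follows from $E=0$ via Lemma \ref{lemma:transversality}) and Lemma \ref{lemma:closed-base change}; the remaining work is the bookkeeping that collapses a composite of base change isomorphisms to one, i.e. the transitivity asserted in Lemma \ref{lemma:basechange}. As elsewhere in this paper, the fact that all these identifications are sign-free is exactly what makes the strictly commutative virtual category of Definition \ref{Defn:strictlycomm} the correct setting. (One can alternatively argue by exhibiting $\mathcal E$, via condition (a), as a Tor-independent base change of a trivial square and checking the claim there; but that route still rests on the same model computation.)
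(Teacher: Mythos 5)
Your approach is genuinely different from the paper's: you unwind the explicit deformation-to-the-normal-cone construction and check that every ingredient degenerates to base change, whereas the paper argues axiomatically, applying condition (a) to the degenerate cube whose top face is the square with identity horizontal maps (so that the claim becomes ``the excess automorphism of the trivial square is the identity'') and then invoking condition (d) to split into the closed-immersion and projective-bundle cases. For the closed-immersion half your route is workable and is essentially Lemma \ref{lemma:closed-base change} plus Proposition \ref{prop:reductiontolinebundles} in disguise; your observations that Tor-independence kills the excess bundle and that Lemma \ref{lemma:transversality} makes $\mathcal E_\infty$ Tor-independent are correct. Note, though, that what you actually prove there is that the \emph{constructed} isomorphism is base change; to get the ``necessarily'' of the statement you must also cite the uniqueness theorem of Section 5, which identifies any $\Psi$ satisfying (a)--(e) with the constructed one.

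The genuine gap is the projective bundle projection case, which you dispose of in a parenthesis: ``for the $p$-square \ldots the excess isomorphism is the base change isomorphism.'' At this point in the paper nothing has been constructed for projective bundle projections, and there is no construction to unwind; the assertion that the axioms force the isomorphism to be base change there is precisely Proposition \ref{prop:projbundleproj}, which the paper proves inside the proof of this very proposition by a nontrivial argument: faithfulness of pullback along $\bb P(V^\vee)\to X$ on $K_1$, reduction via the projective bundle formula to objects $f^!x\otimes\calo(i)$, descent of the problem to the relative divisor $\bb P(V')\subset\bb P(V)$ using (b) and (d) and the closed-immersion case, induction on the rank, and finally, for $f=\op{Id}$, the observation that condition (d) forces the resulting automorphism to square to itself and hence be trivial. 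Without some version of this argument your general case, which composes the $i$-square with the $p$-square via (d), does not go through, and the statement remains unproved exactly where it is least accessible to a direct computation.
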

\begin{proof} The excess bundle is trivial and we need to verify that the resulting excess isomorphism ${f'}_! {g'}^! \simeq g^! f_! $ is given by base change.  Consider the following cubical diagram: $$   \xymatrix@!{
 &  X \ar@{->}[rr]^{Id} \ar@{->}'[d][dd]^{Id}
   & &  X \ar@{->}[dd]^{g'}
\\
 {X'} \ar@{<-}[ur]^{ f'} \ar@{->}[rr]^(0.40){ Id}\ar@{->}[dd]^{Id}
 & & {X'} \ar@{<-}[ur]^{f'}\ar@{->}[dd]^(0.35){g}
\\
 & X \ar@{->}'[r][rr]^{g' }
   & & Y.
\\
 X' \ar@{->}[rr]^{ g}\ar@{<-}[ur]^{f' }
 & & Y' \ar@{<-}[ur]^{f}
}$$
This satisfies the conditions of (a), and thus interchanges the excess isomorphism of the upper and lower via the base change isomorphism. The result follows if we can show that the excess isomorphism given by the upper diagram is the identity. By (d) and Lemma \ref{lemma:basechange} we only need to treat separately the case of $f'$ being a regular closed immersion and that of $f'$ being a projective bundle projection. The case of regular immersions easily follows from the previous section, using Proposition \ref{prop:reductiontolinebundles} and Lemma \ref{lemma:closed-base change}. For the projective bundle projection, we state it as a proposition:

\begin{Prop} \label{prop:projbundleproj}Suppose that $f$ is a projective bundle projection. Then the excess isomorphism associated to $f$ is uniquely determined and is necessarily given by Tor-independent base change.
\end{Prop}
\begin{proof} Consider the natural projection $f: \bb P(V) \to X$ for $V$ a vector bundle on $X$. We consider again the case when $g$ and $g'$ are the identity by the reduction above. Base change with $\bb P(V^\vee) \to X$ induces by Lemma \ref{lemma:projbundleformulastacks} an injection on the level of $K_1$ and thus a faithful functor on the level of virtual categories. We can furthermore suppose that $f: \bb P(V) \to X$ is such that $V$ has a canonical vector bundle $V' \subseteq V$ defining $\bb P(V') \to \bb P(V)$ as a relative divisor over $X$. The projective bundle formula again (cf. Lemma \ref{lemma:projbundleformulastacks}) shows that we only have to evaluate on objects of the form $f^! x \otimes \calo(i)$ for different $i$. Together with compatibility of compositions in (d), the case of regular closed immersions already considered and compatibility with the projection formula in (b), we see that the induced map is the identity on the direct image of $\bb P(V') \to X$ if and only if it is so for $f$. By induction we are reduced to the case when $f$ itself is the identity. But then compatibility with composition in (d) applied again shows that the induced automorphism on every object $x$ squares to itself, so it is necessarily the identity and we conclude.\end{proof}
We conclude the proposition.
\end{proof}
This forces us to define the excess isomorphism for projective bundles, and thus composition of projective bundles, as the base change isomorphism. Moreover, the base change isomorphism is compatible with base change and composition itself by Lemma \ref{lemma:basechange}. The trivialization condition is immediate, and condition (b) follows from a simple calculation of the cohomology sheaves considered above. We will return to its compatibility with the rough excess isomorphism in the next section.

\section{General excess isomorphism} \label{section:generalexcess}
In the rest of the paper we will tie together the isomorphisms constructed in the preceding sections and finally construct the total excess isomorphism. \\

Suppose first that we have a diagram $\mc E$ and a decomposition of a proper morphism $f: Y \to Y'$ as $f_\tau: Y \stackrel{i}{\rightarrow} \bb P_{Y'}(N) \stackrel{\pi}{\rightarrow} Y'$
where $i$ is a regular closed immersion,  $\pi$ is a projective bundle-projection and the subscript $\tau$ denotes this choice of factorization. By base change we obtain the composition of two Cartesian diagrams
$$\xymatrix{X \ar[d]^{i'} \ar[r]^{g''} & Y \ar[d]^i \\
\bb P_{X'}(g^* N) \ar[r]^{g'} \ar[d]^{\pi'} & \bb P_{Y'}(N) \ar[d]^\pi \\
X' \ar[r]^{g} & Y'.}$$
Denote by $E_\tau$ the associated excess bundle (of the upper square). In general, if we have two factorizations with a
morphism $r$,
$$\xymatrix{& P \ar[dd]^r \ar[dr]^\pi & \\
Y \ar[ur]^i \ar[dr]^{i'} & & Y' \\
& P' \ar[ur]^{\pi'} &  }$$ with $\pi$ and $\pi'$ smooth, we get an isomorphism
$$\phi_{\tau, \tau', r}: E_\tau \to E_{\tau'}$$
with the property that
$$\phi_{\tau', \tau'', r} \phi_{\tau, \tau', s} = \phi_{\tau, \tau'', sr}.$$
Now, given two arbitrary factorizations $\tau, \tau'$, we compare
them with the diagonal

$$\xymatrix{& & \bb P_{Y'}(N) \ar[dr] & \\
Y \ar[rr] \ar[urr] \ar[drr] & & \bb P_{Y'}(N) \times_{Y'} \bb P_{Y'}(M) \ar[u]^{\op{pr}_1} \ar[d]^{\op{pr}_2} \ar[r] & Y'\\
  &                   & \bb P_{Y'}(M) \ar[ur] }$$
and put
$$\phi_{\tau, \tau'} = \phi_{\tau, \tau \times \tau', \op{pr}_1} \left( \phi_{\tau', \tau \times \tau', \op{pr}_2}\right)^{-1}:E_{\tau'} \to
E_\tau.$$
This defines an isomorphism $\phi_{\tau, \tau'}: E_{\tau'} \to E_\tau$ which satisfies the cocycle condition $\phi_{\tau, \tau'} \phi_{\tau', \tau''} = \phi_{\tau, \tau''}$ so they glue together to an virtual excess-bundle $E$, determined up to canonical isomorphism. We define the excess isomorphism $\Psi_{\mc E, \tau}$ via the naive composition of the excess isomorphisms of the two diagrams. Since the isomorphism depends in a weak sense only on $f$ we will sometimes abbreviate it with $\Psi_{f, \tau}$. We give a list of basic compatibilities needed:

\begin{TL} \label{lemma:excessindep}We suppose the diagrams $\mc E$ (and thus the various morphisms $g$) implicit. \begin{itemize}
                                              \item Let $i$ be a section to a projection bundle-projection $\pi: \bb P_Y(N) \to Y$. Then $\Psi_\pi \Psi_i = \op{Id}$. \\
                                              \item Suppose that we have a Cartesian diagram $$\xymatrix{\bb P_Y(i^* N) \ar[d]^{\pi'} \ar[r]^{i'} & \bb P_{Y'}(N) \ar[d]^\pi \\
                                              Y \ar[r]^i & Y'}$$ with $i$ a regular immersion. Then $\Psi_i \Psi_{\pi'} = \Psi_{\pi} \Psi_{i'}$. \\
                                              \item Suppose we have a diagram $$\xymatrix{& \bb P_{Y'}(N)  \ar[d]^\pi \\
                                              Y \ar[ur]^j \ar[r]^i & Y'}$$ with $i$ being regular closed immersion and $\pi$ the projective bundle-projection. Then $\Psi_i = \Psi_\pi \Psi_j$.
                                            \end{itemize}
\end{TL}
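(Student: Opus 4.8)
The plan is to reduce all three assertions to cases that are already settled: closed immersions (Sections 5 and 6), projective bundle projections, where $\Psi$ is the base change isomorphism (Propositions \ref{Prop:tor-independentexcess} and \ref{prop:projbundleproj}), the composition property (d) for closed immersions (Proposition \ref{Prop:composition}), and the transitivity of base change (Lemma \ref{lemma:basechange}). All squares below are implicitly base changed along the fixed morphism $g$ carried by $\mc E$. For the first assertion I would use that, since $i$ is a section of the smooth morphism $\pi$, it is a regular closed immersion cut out étale-locally on $\bb P_Y(N)$ by a regular sequence whose Koszul complex is a complex of \emph{free} $\calo_Y$-modules resolving $\calo_Y$; pulling this resolution back along an arbitrary base change is a flat base change over $\calo_Y$, so it remains a resolution, and hence the base-changed square of $i$ is Tor-independent. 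The square of $\pi$ is flat, hence Tor-independent too. By Proposition \ref{Prop:tor-independentexcess} both $\Psi_i$ and $\Psi_\pi$ are base change isomorphisms, so by Lemma \ref{lemma:basechange} their composite is the base change isomorphism of the pasted rectangle; but that rectangle has $\pi\circ i=\op{Id}_Y$ along the top and bottom, so its base change isomorphism is the identity and $\Psi_\pi\Psi_i=\op{Id}$.

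For the second assertion, $\pi$ and its base change $\pi'$ are flat, so the squares of $\pi$ and $\pi'$ are Tor-independent and $\Psi_\pi,\Psi_{\pi'}$ are base change isomorphisms; it then remains to compare $\Psi_i$ composed with base change along $\pi'$ to $\Psi_{i'}$ composed with base change along $\pi$. Since the given square is Cartesian, $i'$ is the base change of the regular closed immersion $i$ along the flat morphism $\pi$, hence again a regular closed immersion. I would apply condition (a) to the cube whose lower and upper faces are the squares of $i$ and $i'$ and whose vertical arrows are the projective bundle projections — the side faces being flat, hence Tor-independent — to identify $\Psi_{i'}$ with the pullback of $\Psi_i$, and then read off the desired equality from the commutative diagram of condition (a), using Lemma \ref{lemma:basechange} to recognize the relevant composites of base change isomorphisms.

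For the third assertion I would form the fibre square of $\pi$ over $i$, with top row $\bb P_Y(i^\ast N)\xrightarrow{i'}\bb P_{Y'}(N)$, bottom row $Y\xrightarrow{i}Y'$, and vertical maps $\pi'$ and $\pi$. Because $\pi\circ j=i=i\circ\op{Id}_Y$, the universal property of the fibre product produces a section $s\colon Y\to\bb P_Y(i^\ast N)$ of $\pi'$ with $i'\circ s=j$. Here $i'$ is a regular closed immersion (base change of $i$ along the flat $\pi$) and $s$ is a regular closed immersion (section of the smooth $\pi'$), so Proposition \ref{Prop:composition} gives $\Psi_j=\Psi_{i'}\Psi_s$, whence $\Psi_\pi\Psi_j=\Psi_\pi\Psi_{i'}\Psi_s=\Psi_i\Psi_{\pi'}\Psi_s=\Psi_i$, the middle equality being the second assertion applied to the fibre square and the last being the first assertion applied to the section $s$.

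I expect the main obstacle to be the bookkeeping in the second assertion: one must set up the comparison cube so that the hypotheses of condition (a) are literally met, and then carefully match the base change isomorphisms it produces with those coming from Lemma \ref{lemma:basechange}; the Tor-independence claim in the first assertion is the only other non-formal point, and once both are in place the third assertion is purely formal. A more computational alternative for the second assertion, avoiding condition (a), would be to deform $i$ to its zero section and compute directly with the compatible Koszul resolutions, using the compatibility lemma at the end of the proof of Proposition \ref{Prop:composition} together with the splitting principle; but the argument above seems cleaner.
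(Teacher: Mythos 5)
Your first and third bullets follow the paper's own proof almost verbatim: the first is exactly the observation that both squares are Tor-independent (the paper gets this from Lemma \ref{lemma:transversality} rather than your Koszul/flatness argument, but both are fine), so that Proposition \ref{Prop:tor-independentexcess} and Lemma \ref{lemma:basechange} reduce the claim to ``base change of the identity is the identity''; the third is precisely the paper's Lichtenbaum trick, with the same section $s$ and the same chain $\Psi_\pi\Psi_j=\Psi_\pi\Psi_{i'}\Psi_s=\Psi_i\Psi_{\pi'}\Psi_s=\Psi_i$.

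The one place where your sketch has a genuine hole is the second bullet. Condition (a), applied to your cube with vertical arrows $\pi$ and $\pi'$, only controls $\Psi_{i'}$ evaluated on virtual bundles of the form ${\pi'}^!x$ with $x$ on $Y$; but the equality $\Psi_i\Psi_{\pi'}=\Psi_\pi\Psi_{i'}$ is an identity of natural transformations on all of $V(\bb P_Y(i^*N))$, so you cannot simply ``read it off'' from that diagram. You need the additional reduction, via additivity and the projective bundle formula (Lemma \ref{lemma:projbundleformulastacks}), to bundles of the form ${\pi'}^!F\otimes\calo(k)$ with $0\le k<\rk N$, and then a separate argument for the twist by $\calo(k)={i'}^!\calo(k)$ using the projection formula (condition (b)) and the explicit computation of $\pi_!$ on the generators. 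This is exactly the step the paper makes explicit (``we can suppose by additivity and standard reduction that our bundles are of the form $\pi^!F\otimes\calo(k)$'') before leaving the remaining diagram chase with $\pi_!\pi^!=\op{Id}$ to the reader; your argument should at least record this reduction, since without it condition (a) does not suffice. Once that is inserted, your route coincides with the paper's.
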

\begin{proof} In the first case there is no excess and the statement becomes that the composition of two base change isomorphisms is the base change of the composition, which is the identity. \\
For the second case, we can suppose by additivity and standard reduction that our bundles are of the form $\pi^! F \otimes \calo(k)$ for $0 \leq k < n = \op{rk} N$.
The argument is now a lengthy but elementary application of the relationship $\pi_!  \pi^! = \op{Id}$ together with stability under base change already established for closed immersions, which we leave to the interested reader. \\
For the third point, notice that by \cite{SGA6}, VIII, Corollaire 1.2, $j$ (and the corresponding implicit $j'$) is also a regular closed immersion so the statement makes sense. One uses the Lichtenbaum-trick to reduce to the case of a morphism with section:
$$\xymatrix{ \bb P_Y(i^* N ) \ar@/^/[d]^{\pi'}  \ar[r]^{i'} & \bb P_{Y'}(N) \ar[d]^{\pi} \\
Y \ar[r]^i \ar@/^/[u]^s \ar[ur]^j & Y'.}$$
Here $s$ is the section determined by $j$ and base change. Then $\Psi_\pi \Psi_j = \Psi_\pi \Psi_{i'} \Psi_s = \Psi_i \Psi_{\pi'} \Psi_s = \Psi_i$ by the preceding statements.
\end{proof}

\begin{Prop} \label{prop:excessindep} With the above virtual excess bundle the $\Psi_{f, \tau}$ do not depend on choice of $\tau$.
\end{Prop}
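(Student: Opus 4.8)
The plan is to pass, by means of the same diagonal construction that was used to glue the $E_\tau$ into the virtual excess bundle $E$, to the case where one factorization refines another, and there to apply the third point of Lemma \ref{lemma:excessindep}. So let $\tau = (Y \xrightarrow{i_N} \bb P_{Y'}(N) \xrightarrow{\pi_N} Y')$ and $\tau' = (Y \xrightarrow{i_M} \bb P_{Y'}(M) \xrightarrow{\pi_M} Y')$ be two factorizations, put $P = \bb P_{Y'}(N)\times_{Y'}\bb P_{Y'}(M)$ with projections $\op{pr}_1,\op{pr}_2$ onto the two factors, and let $i_P = (i_N,i_M)\colon Y \to P$. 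Each $\op{pr}_j$ is a projective bundle projection (it is the base change of $\pi_M$ resp. $\pi_N$), $\op{pr}_1 i_P = i_N$ and $\op{pr}_2 i_P = i_M$; since $i_N$ is a regular closed immersion and $\op{pr}_1$ is smooth, $i_P$ is a regular closed immersion by \cite{SGA6}, VIII, Corollaire 1.2, and the identifications $\phi_{\tau,\tau\times\tau',\op{pr}_1}$ and $\phi_{\tau',\tau\times\tau',\op{pr}_2}$ of the construction are defined.

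The key step is to compare $\tau$ with $\tau\times\tau'$. Applying the third point of Lemma \ref{lemma:excessindep} to $Y \xrightarrow{i_P} P \xrightarrow{\op{pr}_1} \bb P_{Y'}(N)$, with $i_N$ the regular closed immersion into the base, gives $\Psi_{i_N} = \Psi_{\op{pr}_1}\Psi_{i_P}$, where the relevant excess-bundle identification is $\phi_{\tau,\tau\times\tau',\op{pr}_1}$ (both it and the identification implicit in the lemma are induced by restricting to $Y$ the surjection of conormal bundles, hence coincide). Composing on the left with the excess isomorphism $\Psi_{\pi_N}$ of the projective bundle projection, and recalling that $\Psi_{\pi_N}$ and $\Psi_{\op{pr}_1}$ are both base change isomorphisms (Proposition \ref{prop:projbundleproj}) so that $\Psi_{\pi_N}\Psi_{\op{pr}_1}$ is the base change isomorphism for the projection $P \to Y'$ (Lemma \ref{lemma:basechange}), one obtains, using $\Psi_{f,\tau} = \Psi_{\pi_N}\Psi_{i_N}$ by definition,
$$\Psi_{f,\tau} \;=\; \Psi_{\pi_N}\Psi_{\op{pr}_1}\Psi_{i_P}$$
after the identification $\phi_{\tau,\tau\times\tau',\op{pr}_1}$. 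The same argument through $\op{pr}_2$ yields $\Psi_{f,\tau'} = \Psi_{\pi_M}\Psi_{\op{pr}_2}\Psi_{i_P}$ after $\phi_{\tau',\tau\times\tau',\op{pr}_2}$.

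Now $\Psi_{\pi_N}\Psi_{\op{pr}_1}$ and $\Psi_{\pi_M}\Psi_{\op{pr}_2}$ are both the base change isomorphism for $P \to Y'$, again by Lemma \ref{lemma:basechange}, so $\Psi_{\pi_N}\Psi_{\op{pr}_1}\Psi_{i_P} = \Psi_{\pi_M}\Psi_{\op{pr}_2}\Psi_{i_P}$; since $\phi_{\tau,\tau'}$ is by definition of the glued bundle $E$ obtained by composing $\phi_{\tau,\tau\times\tau',\op{pr}_1}$ with the inverse of $\phi_{\tau',\tau\times\tau',\op{pr}_2}$, combining the two displays gives $\Psi_{f,\tau} = \Psi_{f,\tau'}$ as natural transformations of additive functors, which is the assertion.

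The main obstacle is the compatibility of excess-bundle identifications used above: one must verify that $\phi_{\tau,\tau',r}$, defined for the gluing of $E$ out of the surjection of conormal bundles, is exactly the isomorphism that makes the diagram of Lemma \ref{lemma:excessindep}(iii) commute, and not merely one isomorphic to it. This is a bookkeeping check on the compatibility of the relevant Koszul resolutions with the conormal-bundle exact sequences, entirely in the spirit of the auxiliary lemma at the end of the proof of Proposition \ref{Prop:composition}, and is where the choices — and the sign — have to be tracked carefully.
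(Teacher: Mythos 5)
Your argument is correct and is essentially the paper's own proof: both compare $\tau$ and $\tau'$ through the fibred product $\bb P_{Y'}(N)\times_{Y'}\bb P_{Y'}(M)$, use the third point of Lemma \ref{lemma:excessindep} to rewrite each $\Psi_{i}$ through the diagonal immersion, and collapse the two composites of projective-bundle projections to the single base change isomorphism for $P\to Y'$ via Proposition \ref{Prop:tor-independentexcess} and Lemma \ref{lemma:basechange}. Your closing remark on matching the gluing data $\phi_{\tau,\tau',r}$ with the identifications implicit in Lemma \ref{lemma:excessindep}(iii) is a legitimate bookkeeping point that the paper leaves tacit, not a gap in your argument.
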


\begin{proof} Let $\tau$ and $\tau'$ be two different factorizations as above. Considering again the diagram
$$\xymatrix{ & & \bb P_{Y'}(N) \ar[rrd]^\pi \\
Y \ar[urr]^i  \ar[rr]^{i''} \ar[rrd]^{i'} & & \bb P_{Y'}(N) \times_{Y'} \bb P_{Y'}(M) \ar[u]^{q'} \ar[d]^{q} \ar[rr]^{\pi''} & & Y' \\
& & \bb P_{Y'}(M) \ar[urr]^{\pi'}.}$$

By Proposition \ref{Prop:tor-independentexcess}, the excess isomorphism associated to $\pi''$ is necessarily given by Tor-independent base change and in particular defined. Put $\Psi_{f, \tau}$ equal to the naive composition of the
isomorphisms induced by $i$ and $\pi$. We obtain
equalities of isomorphisms
$$\Psi_{f, \tau'} = \Psi_\pi \Psi_i  = \Psi_{\pi} \Psi_{q'}
\Psi_{i''} = \Psi_{\pi''} \Psi_{i''} = \Psi_{\pi'} \Psi_{q} \Psi_{i''}  = \Psi_{\pi'} \Psi_{i'} =
\Psi_{f, \tau}.$$ Hence all isomorphisms $\Psi_{f, ?}$ are in fact
one single morphism, defining $\Psi_{\mc E}$.

\end{proof}

It remains to prove the following composition property, which is the only property which doesn't follow immediately from what is already established.

\begin{T} Suppose that we have morphisms
$$\xymatrix{X \ar[r]^f & Y \ar[r]^g & Z}$$ with $f$ and $g$ projective \lci morphisms.
 Then $$\Psi_f \circ \Psi_g = \Psi_{fg}.$$
\end{T}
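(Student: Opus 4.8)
The plan is to reduce the general composition property to the elementary cases already established: composition of two regular closed immersions (Proposition \ref{Prop:composition}); a regular closed immersion followed by a projective bundle projection, which is literally the definition of $\Psi$ for the resulting morphism in Section \ref{section:generalexcess}; composition of two projective bundle projections, where each $\Psi$ is Tor-independent base change (Proposition \ref{prop:projbundleproj}) so that the composition property is Lemma \ref{lemma:basechange}; and the mixed-order compatibilities of Lemma \ref{lemma:excessindep}. Throughout I will freely use that $\Psi$ for a projective \lci morphism is independent of the chosen factorization into a regular closed immersion followed by a projective bundle projection (Proposition \ref{prop:excessindep}).

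First I would fix, once and for all, a factorization $f = \pi_f\circ i_f$ with $i_f\colon X\hookrightarrow \bb P_Y(N)$ a regular closed immersion and $\pi_f\colon \bb P_Y(N)\to Y$ a projective bundle projection, so that $\Psi_f=\Psi_{\pi_f}\Psi_{i_f}$. Since $g\circ f=(g\circ \pi_f)\circ i_f$, the theorem reduces to two statements: (A) $\Psi_{g\circ \pi}=\Psi_g\Psi_\pi$ when $\pi$ is a projective bundle projection and $g$ is projective \lci; and (B) $\Psi_{h\circ i}=\Psi_h\Psi_i$ when $i$ is a regular closed immersion and $h$ is projective \lci. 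Granting these, (B) applied with $h=g\circ\pi_f$, $i=i_f$ and then (A) with $\pi=\pi_f$ give $\Psi_{g\circ f}=\Psi_{g\circ\pi_f}\Psi_{i_f}=\Psi_g\Psi_{\pi_f}\Psi_{i_f}=\Psi_g\Psi_f$. Statement (B) is immediate: factoring $h=\pi_h\circ i_h$ with $i_h$ a regular closed immersion into a projective bundle $\bb P_Z(W)$ and $\pi_h$ the projection, the composite $h\circ i=\pi_h\circ(i_h\circ i)$ is again of the standard form because $i_h\circ i$ is a regular closed immersion, so by the definition of $\Psi_{h\circ i}$ through this factorization together with Proposition \ref{Prop:composition} applied to $i_h\circ i$ one gets $\Psi_{h\circ i}=\Psi_{\pi_h}\Psi_{i_h\circ i}=\Psi_{\pi_h}\Psi_{i_h}\Psi_i=\Psi_h\Psi_i$.

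For (A) I would factor $g=\pi_g\circ i_g$ with $i_g\colon Y\hookrightarrow \bb P_Z(M)$ and $\pi_g$ the projection, and use the resolution property of $\bb P_Z(M)$ to pick a vector bundle $\tilde N$ on $\bb P_Z(M)$ with a surjection $\tilde N|_Y\twoheadrightarrow N$. This yields a regular closed immersion $c\colon \bb P_Y(N)\hookrightarrow \bb P_Y(\tilde N|_Y)$ and a factorization $\pi=\pi^\circ\circ c$ with $\pi^\circ\colon \bb P_Y(\tilde N|_Y)\to Y$ the projection, hence $\Psi_\pi=\Psi_{\pi^\circ}\Psi_c$; moreover $\bb P_Y(\tilde N|_Y)=\bb P_{\bb P_Z(M)}(\tilde N)\times_{\bb P_Z(M)}Y$ fits in a Cartesian square over $i_g$ with top arrow $b\colon \bb P_Y(\tilde N|_Y)\hookrightarrow \bb P_{\bb P_Z(M)}(\tilde N)$ a regular closed immersion and right arrow $q$ the projection, so Lemma \ref{lemma:excessindep} (second point) gives $\Psi_{i_g}\Psi_{\pi^\circ}=\Psi_q\Psi_b$. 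Thus $\Psi_g\Psi_\pi=\Psi_{\pi_g}\Psi_{i_g}\Psi_{\pi^\circ}\Psi_c=\Psi_{\pi_g}\Psi_q\Psi_b\Psi_c$. On the other hand $g\circ\pi=\pi_g\circ q\circ b\circ c$; factoring the smooth morphism $\pi_g\circ q$ as a regular closed immersion $s$ into a projective bundle $\bb P_Z(V)$ (possible since both source and target are $Z$-smooth) followed by the projection $\rho_V$, the composite $g\circ\pi=\rho_V\circ(s\circ b\circ c)$ is a standard factorization, so by the definition of $\Psi_{g\circ\pi}$ and Proposition \ref{Prop:composition}, $\Psi_{g\circ\pi}=\Psi_{\rho_V}\Psi_s\Psi_b\Psi_c$. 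The remaining point is that $\Psi_{\rho_V}\Psi_s=\Psi_{\pi_g}\Psi_q$: both equal $\Psi_{\pi_g\circ q}$ — the left side by the construction of Section \ref{section:generalexcess} and Proposition \ref{prop:excessindep}, and the right side because $\pi_g\circ q$ is smooth, so its excess bundle is trivial and $\Psi_{\pi_g\circ q}$ is base change by Proposition \ref{Prop:tor-independentexcess}, while $\Psi_{\pi_g}$ and $\Psi_q$ are each base change by Proposition \ref{prop:projbundleproj} and compose to the base change of $\pi_g\circ q$ by Lemma \ref{lemma:basechange}. This yields (A), and with it the theorem.

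I expect the main obstacle to be precisely this last identification inside (A): one has to circumvent the fact that a composite of projective bundle projections need not be a single projective bundle projection, and hence that the "intermediate" morphisms appearing in the factorizations are not a priori of the form for which $\Psi$ was defined. This is exactly what the combination of Proposition \ref{prop:excessindep} (factorization independence) and Proposition \ref{Prop:tor-independentexcess} (excess equals base change when the excess bundle is trivial) is designed to absorb; once that and the bookkeeping of Lemma \ref{lemma:excessindep} are in place, everything else is formal manipulation of the already-established compatibilities.
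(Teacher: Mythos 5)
Your argument is correct and is essentially the paper's own proof, reorganized: the paper compresses your steps (A) and (B) into a single chain of equalities on one large diagram, with your re-embedding $c$ into $\bb P_Y(\tilde N|_Y)$ (for $\tilde N$ extending over the middle projective bundle), your Cartesian square $(b,q)$, and your factorization $\rho_V\circ s$ of the smooth composite playing exactly the roles of the paper's choice of $V$ on $Q$, its square $(k,t)$, and its Serre re-embedding $P\circ r$, and with the same ingredients (Propositions \ref{prop:excessindep}, \ref{Prop:composition}, \ref{Prop:tor-independentexcess} and Lemmas \ref{lemma:excessindep}, \ref{lemma:basechange}) carrying the argument. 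The only slip is the direction of the comparison map in (A): with the convention $\bb P(V)=\op{Proj}(\op{Sym}V^\vee)$ a closed immersion of projective bundles is induced by a subbundle inclusion (cf.\ the paper's $N'\subseteq {g'}^*N$), so you need an injection $N\hookrightarrow \tilde N|_Y$ with locally free quotient --- obtained for instance by dualizing a surjection from a vector bundle onto $(i_g)_*N^\vee$ --- rather than a surjection $\tilde N|_Y\twoheadrightarrow N$.
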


We can factor as follows, for big enough $n$,

$$\xymatrix{
& & \bb P_Q(V) \ar[dr]^{t}  \ar[rr]^r &   & \bb P_Z(q_* (V^\vee \otimes \calo_{Q}(n))) \ar[dd]^P & \\
&  \bb P_Y(j^* V) \ar[ur]^{k} \ar[dr]^{p} & & \bb P_Z(V') = Q \ar[dr]^{q} & & \\
X \ar[ur]^{i} \ar[rr]^f & & Y \ar[rr]^g \ar[ur]^{j} & & Z \\}$$ where $V, V'$ and $q_* (V \otimes \calo_{Q} (n))$ are locally free (for large enough $n$). This can be written in this form because $Y$ has the resolution property and both $f$ and $g$ are supposed to be projective (see the argument in \cite{Riemann-RochAlgebra}, chapter IV, Proposition 3.12). Now, we have by Lemma \ref{lemma:excessindep}
and Proposition \ref{prop:excessindep}:
$$\Psi_g \Psi_{f} = \Psi_q \Psi_j \Psi_{p} \Psi_i = \Psi_{q} \Psi_t \Psi_k \Psi_i = \Psi_{P} \Psi_r \Psi_k \Psi_i = \Psi_P \Psi_{r k i} = \Psi_{g f}.$$
We conclude the proof of the main theorem since the necessary properties are preserved under the above composition.

\appendix
\section{Deformation to the normal cone } \label{appenix:deformation}

In this section we review for the convenience of the reader some very well-known facts about the deformation to the normal cone, without proof. A reference for details of the below is \cite{intersection}, chapter "Deformation to the Normal Cone" and also \cite{ComplexImmersions}, Section 4. \\
First of all, given a section $s$ of a rank $r$ vector bundle $E$ on an algebraic stack $X$, one has an induced dual section $s^\vee: E^\vee \to \calo_X$ given by the composition of $\calo_X \to E$ with $E \otimes E^\vee \to \calo_X.$ This leads to the Koszul complex $$0 \to \Lambda^r E^\vee \to  \Lambda^{r-1} E^\vee \to \ldots  \to \Lambda^2 E^\vee \to E^\vee \to \calo_X \to \calo_{Z(s)} \to 0,$$ which in local coordinates is given by $$e_1 \wedge \ldots \wedge e_j \mapsto \sum_{i = 0}^j (-1)^i s^\vee(e_i) e_1 \wedge \ldots \wedge \widehat{e_i} \wedge \ldots \wedge e_j$$ where $Z(s)$ is the zero-locus of the section $s$. We say that $s$ is a regular section if the above complex is exact and that the complex $\Lambda^\bullet E^\vee$ is a Koszul resolution of $\calo_{Z(s)}$. \\
Suppose that $i: X \to Y$ is a (representable) regular closed immersion of algebraic stacks, and let $N = N_{Y/X} = N_i$ be the normal bundle to $i$. We have a map $\bb P^1_X \to \bb P^1_Y$, and we define $M$ as the blow-up of $\bb P^1_Y$ in $X \times \{ \infty \}$ \textit{i.e.}  $\hbox{Proj}(\oplus_{n\geq 0} \mc I^n)$ where $\mc I$ is the ideal of the immersion $i$ (see \cite{MB}, 14.3). We have a natural morphism $\pi: M \to \bb P^1_Y \to \bb P^1_S$, so we have a diagram $$\xymatrix{\bb P^1_X \ar[rr] \ar[dr] & & M \ar[dl]^\pi \\ & \bb P^1}$$
since by the universal property of blowing up the map $\bb P^1_X \to \bb P^1_Y$ lifts to $M$. Moreover, $N_{X \times \{\infty \}/ \bb P^1_Y } \simeq N_{X/Y}  \oplus N_{\infty/\bb P^1} \simeq N \oplus 1$ (see \cite{intersection}, Appendix B.6.3 and chapter IV, Proposition 3.6). The last isomorphism $N_{\infty/\bb P^1} \simeq 1$ is non-canonical, but we fix one once and for all to lax the notation. The exceptional divisor on $M$ is a Cartier divisor isomorphic to $\bb P(N \oplus 1)$ and the fiber over $\infty$ is isomorphic to the union of this exceptional divisor and $Bl_X Y = \widetilde{Y}$, gluing together along a substack isomorphic to $\bb P(N)$
(here we put $\bb P(\mc F) = \hbox{Proj}(\hbox{Sym} \mc F^\vee )$ for a coherent sheaf $\mc F$). We have
that for $s \in \bb P^1$, $\pi^{-1}(s) = Y$ if $s \neq \infty$, and is equal to $\bb P(N \oplus 1) \cup  \widetilde{Y}$ for $s = \infty$, and the image
of $X$ does not meet $\widetilde{Y}$. In fact, it is embedded via the subbundle $1$ in $N \oplus 1$ where we embed it as $x \mapsto
(0,x)$. In other words, it embeds as $X = \bb P(1) \to \bb P(N \oplus 1)$ or as the zero-section $X \to N$ followed by the open
immersion $N \to \bb P(N \oplus 1)$. Let $p: \bb P(N \oplus 1) \to X$ be the projection. On $\bb P(N \oplus 1)$ we have a universal
exact sequence $$ 0 \to \calo_P(-1) \to p^* N \oplus 1 \to T_{P/X}(-1) \to 0 $$ where $\calo_P(-1)$ is the  universal sub bundle. By \cite{intersection}, Appendix B. 5.6, we know that the section determined by
\begin{equation} \label{sectionCanonical} \calo_P(-1) \to p^* N \oplus 1 \to p^* N \end{equation}
 is a regular section with zero-locus equal to $X = \bb P(1) \subseteq \bb P(N \oplus 1)$. We also have another section defined by $1 \to p^* N \oplus 1 \to T_{P/X}(-1)$ which is also regular with the same zero-locus. The reason we use the former in the constructions in the paper is due to the last point of the following proposition, to which we only indicate necessary references:

\begin{Prop} \label{prop:deformsection}

\begin{itemize}
    \item (\cite{Riemann-RochAlgebra}, chapter IV, Proposition 3.4) Suppose that we have two regular immersions $i: X \hookrightarrow Y, j: Y \hookrightarrow Z$ with normal bundles $N_i, N_j$ and $N_h$, where $h = j \circ i$. We have an exact sequence, localized on $X$:
    $$0 \to N_j \to N_h \to N_i \to 0.$$
    \item (\cite{intersection}, Appendix B, B.6.9) Applying the deformation to the normal cone to $i$ and
    $j \circ i$, the induced morphisms
    $$X \hookrightarrow \bb P(N_i \oplus 1) \hookrightarrow \bb P(N_h \oplus 1)$$
    are the natural ones.
    \item (\cite{intersection}, Appendix B, B.6.9) If we have a Cartesian diagram $$\xymatrix{X \ar[r]^{g'} \ar[d]^{f'} & \ar[d]^f Y \\
     X' \ar[r]^g &  Y' }$$
    where $f, f'$ are closed regular embeddings, the two deformations associated to them are
    compatible in the sense that over the infinite fiber they restrict to a Cartesian diagram
    $$\xymatrix{ X \ar[r]^{g'} \ar[d]^{f'} & \ar[d]^f Y \\
    \bb P(N_{f'} \oplus 1) \ar[r]^g & \bb P(N_f \oplus 1) }$$
    where the morphisms are the natural ones.
    \item (\cite{ComplexImmersions}, Theorem 4.8) Let $i: X \hookrightarrow Y$ be a closed regular embedding defined by a regular section of a vector bundle $N$ on $Y$ defining a Koszul resolution. Then there is a family of complexes of vector bundles parametrized by $\bb P^1$ on the total space of the deformation to the normal cone with the following properties: The restriction to fibers over $\bb P^1 \setminus \infty$ is the given Koszul resolution. The restriction to $\bb P(N_i \oplus 1)$ identifies with the Koszul-resolution $\pi^* N_{i}^\vee(-1) \to \calo_M$ given by (\ref{sectionCanonical}), and the restriction to $\widetilde{Y}$ is split acyclic.
\end{itemize}
\end{Prop}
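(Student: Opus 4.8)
The statement is a compilation of four logically independent standard facts about the deformation to the normal cone, so the plan is to treat each bullet in turn. Throughout, the guiding principle of Section \ref{chapter:excess} lets me reduce a representable construction on the stack to the corresponding statement on a smooth presentation $X_0 \to X$, where all four assertions are classical for schemes; the only supplementary point in each case is that the construction is smooth-local, which holds for $\op{Proj}$, for blow-ups of finitely generated ideals, and for Koszul complexes of sections. Thus the real work is citing the correct scheme-theoretic reference and recording this compatibility.

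For the first bullet I would derive the sequence from the second fundamental (conormal) exact sequence attached to the tower $X \hookrightarrow Y \hookrightarrow Z$. Since $i$, $j$ and $h = j \circ i$ are all regular, a regular sequence cutting out $Y$ in $Z$ extends locally to one cutting out $X$ in $Z$, so the conormal sequence
$$0 \to i^* N_j^\vee \to N_h^\vee \to N_i^\vee \to 0$$
is short exact, the left-exactness being exactly what regularity buys. Dualizing, all conormal bundles being locally free, yields the asserted short exact sequence of normal bundles localized on $X$; this is precisely \cite{Riemann-RochAlgebra}, chapter IV, Proposition 3.4, so beyond recording conventions there is nothing further to prove.

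For the second and third bullets the content is the functoriality of the deformation space $M = \op{Proj}(\oplus_{n \geq 0} \mc I^n)$. I would invoke the universal property of blowing up: a morphism lifts to the blow-up exactly when it pulls the blown-up ideal back to an invertible ideal, and this property is stable both under composing regular immersions and under forming Cartesian squares. For the composition $X \hookrightarrow Y \hookrightarrow Z$ this pins down the map $X \hookrightarrow \bb P(N_i \oplus 1) \hookrightarrow \bb P(N_h \oplus 1)$ over $\infty$ as the natural subbundle inclusion furnished by the first bullet, and for a Cartesian square of regular embeddings it identifies the restriction of $M'$ over $\infty$ with the base change of the restriction of $M$, producing the Cartesian square $\bb P(N_{f'} \oplus 1) \to \bb P(N_f \oplus 1)$. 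Both are \cite{intersection}, Appendix B.6.9; representability and compatibility with the presentation are handled by the reduction of the first paragraph.

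The genuinely substantive point is the fourth bullet, the deformation of a Koszul resolution across $\bb P^1$. The plan is to exhibit on the total deformation space a global section of the relevant twist of $\pi^* N$ whose zero locus is the deformation $\bb P^1_X \hookrightarrow M$ of the immersion, which restricts to the given regular section over $\bb P^1 \setminus \infty$ and to the canonical section (\ref{sectionCanonical}) on the exceptional divisor $\bb P(N_i \oplus 1)$; the associated Koszul complex then restricts as claimed on those two loci. The hard part, and the reason I would in the end simply cite \cite{ComplexImmersions}, Theorem 4.8, is the split acyclicity on the other component $\widetilde Y = Bl_X Y$ of the fiber over $\infty$: here one must show the deformed section becomes nowhere-vanishing on $\widetilde Y$, using that the proper transform of $X$ does not meet $\widetilde Y$, whence its Koszul complex admits an explicit contracting homotopy and is split exact rather than merely exact. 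I would verify the nowhere-vanishing claim \'etale-locally in blow-up chart coordinates and then contract the Koszul complex of a unit section; since this is exactly the cited theorem of Bismut--Gillet--Soul\'e, I would present the argument as a reference with this sketch rather than reproduce the full construction.
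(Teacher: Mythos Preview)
Your proposal is correct and in fact more detailed than the paper's own treatment: the paper explicitly states that this proposition is ``reviewed\ldots without proof'' and is one ``to which we only indicate necessary references,'' and indeed the proposition appears with the citations embedded in each bullet and no proof environment following it. Your reduction to a smooth presentation, the conormal-sequence derivation for the first bullet, the universal-property argument for the second and third, and the section-plus-Koszul sketch for the fourth are all sound and go beyond what the paper itself records; since the paper treats these as standard cited facts, your write-up more than suffices.
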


\begin{TL}\label{lemma:coneterms} Let $$\xymatrix{X \ar[r]^{g'} \ar[d]^{f'} & Y \ar[d]^f \\
X' \ar[r]^g & Y' }$$ be a Cartesian diagram. Let $M'$ and $M$ be the
deformations to the normal cone of $X' \subseteq Y'$  and $X \subseteq Y$ respectively, and $G: M \to M'$ the induced map. Then we have
$$\bb P_{X'}(N' \oplus 1) = G^{-1}(\bb P_X(N \oplus
1)), \widetilde{Y'} = G^{-1}(\widetilde{Y})$$ and $$\bb P(N') = G^{-1}(\bb
P(N)).$$
\end{TL}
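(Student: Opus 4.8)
Since the square is Cartesian, $X\times\{\infty\}$ is the scheme-theoretic inverse image of $X'\times\{\infty\}$ under the map $\bb P^1_Y\to\bb P^1_{Y'}$ induced by $f$, i.e.\ $\mc I_{X\times\infty}=\mc I_{X'\times\infty}\cdot\calo_{\bb P^1_Y}$; by the universal property of the blow-up this is precisely what produces $G$ and shows that it lies over $\bb P^1_Y\to\bb P^1_{Y'}$, hence over $\bb P^1$. Each of the three equalities asserts that a stratum of the fibre of $M$ over $\infty$ is the scheme-theoretic $G$-inverse image of the corresponding stratum of $M'$, so the plan is simply to compute the relevant inverse-image ideal sheaves. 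As these may be checked after passing to a smooth presentation, I would argue throughout as if the stacks were schemes.

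The assertion about the exceptional divisors is immediate: the ideal of the exceptional Cartier divisor of $M'$ is $\mc I_{X'\times\infty}\cdot\calo_{M'}$ and that of $M$ is $\mc I_{X\times\infty}\cdot\calo_M$, so the $G$-inverse-image ideal of the former equals $\mc I_{X'\times\infty}\cdot\calo_M=\mc I_{X\times\infty}\cdot\calo_M$, the ideal of the latter (cf.\ Proposition \ref{prop:deformsection}). Granting the corresponding statement for the strict transforms, the statement for the copies of $\bb P(N)$ follows formally, since inside the fibre over $\infty$ one has $\bb P(N)=\bb P_X(N\oplus 1)\cap\widetilde Y$ and $\bb P(N')=\bb P_{X'}(N'\oplus 1)\cap\widetilde{Y'}$ scheme-theoretically — the two components of that fibre glue along them — and forming $G$-inverse images commutes with scheme-theoretic intersection.

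The step that needs care is the strict transforms $\widetilde Y=Bl_X Y$ and $\widetilde{Y'}=Bl_{X'}Y'$, since strict transform does \emph{not} in general commute with the (possibly non-flat) base change $f$. The plan is to sidestep this by descending to the level of Cartier divisors. The divisor $Y'\times\{\infty\}$ is an effective Cartier divisor on $\bb P^1_{Y'}$ whose local equation $u$ — a coordinate of $\bb P^1$ at $\infty$ — is part of a regular sequence locally generating the ideal of the blown-up centre $X'\times\{\infty\}$; hence $Y'\times\{\infty\}$ meets that centre with multiplicity one, so its pullback to $M'$, as a Cartier divisor, is $\widetilde{Y'}+\bb P_{X'}(N'\oplus 1)$, and likewise the pullback of $Y\times\{\infty\}$ to $M$ is $\widetilde Y+\bb P_X(N\oplus 1)$. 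Because $G$ lies over $\bb P^1_Y\to\bb P^1_{Y'}$, which pulls $u$ back to $u$, the $G$-inverse image of the total transform of $Y'\times\{\infty\}$ is the total transform of $Y\times\{\infty\}$; combining this with the previous paragraph gives the equality of ideal sheaves on $M$
$$(\mc I_{\widetilde{Y'}}\cdot\calo_M)\cdot\mc I_{\bb P_X(N\oplus 1)}=\mc I_{\widetilde Y}\cdot\mc I_{\bb P_X(N\oplus 1)},$$
and cancelling the invertible factor $\mc I_{\bb P_X(N\oplus 1)}$ yields $\mc I_{\widetilde{Y'}}\cdot\calo_M=\mc I_{\widetilde Y}$, which is the remaining identity. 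The multiplicity-one splitting and the gluing description used here are among the standard properties of the deformation to the normal cone recalled in this section (cf.\ \cite{intersection}); the only genuine content is the reduction to Cartier divisors together with the cancellation of the invertible ideal, and the sole obstacle — non-flatness of $f$ — is exactly what that reduction removes.
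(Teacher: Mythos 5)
Your argument is correct and follows essentially the same route as the paper's proof: the exceptional divisors are matched by comparing inverse-image ideal sheaves (the paper cites Fulton, Appendix B.6.9 for this), the strict transforms are handled by pulling back the fibre over $\infty$ as a Cartier divisor and cancelling the common exceptional component, and the $\bb P(N)$ statement is deduced from the other two via compatibility of preimages with the scheme-theoretic intersection of the two components. The only difference is presentational: you spell out the multiplicity-one splitting and the cancellation of the invertible ideal that the paper compresses into ``so we necessarily have.''
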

\begin{proof} Working on a smooth presentation, it follows from \cite{intersection}, Appendix B, B.6.9 that $G^{-1}$ maps the exceptional divisor to the exceptional divisor. Thus $G^{-1}(\bb P(N\oplus 1)) = \bb P(N'\oplus 1)$. Pulling back the fiber at infinity we obtain an equality of Cartier divisors $G^{-1}(\widetilde{Y} \cup \bb P(N\oplus 1)) = \widetilde{Y'} \cup \bb P(N'\oplus 1)$, so we necessarily have $G^{-1}(\widetilde{Y}) = \widetilde{Y'}$. The last point follows from the same reasoning, and the computation $\bb P(N') = \widetilde {Y'} \times_{\widetilde Y} \bb P(N) = M' \times_M \widetilde Y \times_{\widetilde Y} \bb P(N) = M' \times_M \bb P(N) = G^{-1}(\bb P(N)).$
\end{proof}
\providecommand{\bysame}{\leavevmode\hbox to3em{\hrulefill}\thinspace}
\providecommand{\MR}{\relax\ifhmode\unskip\space\fi MR }
\providecommand{\MRhref}[2]{%
  \href{http://www.ams.org/mathscinet-getitem?mr=#1}{#2}
}
\providecommand{\href}[2]{#2}


\begin{thebibliography}{Tho87b}

\bibitem[BGS90]{ComplexImmersions}
J.-M. Bismut, H.~Gillet, and C.~Soul{\'e}, \emph{Complex {I}mmersions and
  {A}rakelov {G}eometry}, Grothendieck {F}estschrift, {V}olume {I}, Progr.
  Math., vol.~86, Birkh{\"a}user Boston, 1990, pp.~249--331.

\bibitem[Del87]{determinant}
\bysame, \emph{Le d{\'e}terminant de la cohomologie}, Contemp. Math.
  \textbf{67} (1987), 93--177.

\bibitem[EriA]{DRR3}
D.~Eriksson, \emph{A {D}eligne-{R}iemann-{R}och theorem}, preprint,
  http://www.math.chalmers.se/~dener.

\bibitem[EriB]{DRR1}
\bysame, \emph{{R}efined operations on {K}-theory by lifting to the virtual
  category}, http://arxiv.org/abs/0904.4059

\bibitem[FL85]{Riemann-RochAlgebra}
W.~Fulton and S.~Lang, \emph{Riemann-{R}och algebra}, Springer Verlag, 1985.

\bibitem[Fra]{Franke-unpublished}
J.~Franke, \emph{Riemann-{R}och in {F}unctorial {F}orm}, unpublished
  manuscript.

\bibitem[Ful98]{intersection}
W.~Fulton, \emph{Intersection theory}, 2nd ed., Springer Verlag, 1998.


\bibitem[LMB00]{MB}
G.~Laumon and L.~Moret-Bailly, \emph{Champs algebriques}, Springer Verlag,
  2000.

\bibitem[Ols07]{Olsson-sheavesartin}
M.~Olsson, \emph{Sheaves on {A}rtin stacks}, J. Reine Angew. Math. (2007),
  no.~603, 55--112.

\bibitem[Qui73]{Quillen}
D.~Quillen, \emph{Higher algebraic {$K$}-theory}, Algebraic $K$-theory, I:
  Higher$K$-theories (Proc. Conf., Battelle Memorial Inst., Seattle,
  Wash.,1972) (Lecture Notes in Mathematics 341), Springer Verlag, . 1973,
  pp.~85--147.

\bibitem[Sai88a]{T.Saito-conductor}
T.~Saito, \emph{Conductor, {D}iscriminant, and the {N}oether formula of
  arithmetic surfaces}, Duke Math. Journal \textbf{57} (1988), no.~1, 151--173.

\bibitem[Sai88b]{0cyclesarithmeticschemes}
\bysame, \emph{Self-intersection 0-cycles and coherent sheaves on arithmetic
  schemes}, Duke Math. J. \textbf{57} (1988), 555--578.

\bibitem[SGA6]{SGA6}
A.~Grothendieck, P.~Berthelot, and L.~Illusie, \emph{S{\'e}minaire de
  {G}{\'e}om{\'e}trie {A}lg{\'e}brique du {B}ois {M}arie - 1966-67 -
  {T}h{\'e}orie des intersections et th{\'e}or{\`e}me de {R}iemann-{R}och {(SGA
  6)}}, Lecture notes in mathematics, vol. 225, Springer Verlag, 1971.

\bibitem[Tho86]{Thomason7}
R.W. Thomason, \emph{Lefschetz-{R}iemann-{R}och theorem and coherent trace
  formula}, Invent. Math. \textbf{85} (1986), no.~3, 515--543.

\bibitem[Tho87a]{Thomason4}
\bysame, \emph{Algebraic {K}-theory of group scheme actions}, Algebraic
  topology and algebraic $K$-theory, Ann. of Math. Stud. 113 (Princeton,
  N.J.,1983), Princeton Univ. Press, 1987, pp.~539--563.

\bibitem[Tho87b]{Thomason3}
\bysame, \emph{Equivariant resolution, linearization and {H}ilbert's fourteenth
  problem over arbitrary base-schemes}, Adv. Math. \textbf{65} (1987), 16--34.

\bibitem[Tho93]{Thomason5}
\bysame, \emph{Les {K}-groupes d'un sch{\'e}ma {\'e}clat{\'e} et une formule
  d'intersection exc{\'e}dentaire}, Invent. Math. \textbf{112} (1993), no.~1,
  195--215.

\bibitem[Tot04]{Totaro}
B.~Totaro, \emph{The resolution property for schemes and stacks}, J. Reine
  Angew. Math. \textbf{577} (2004), 1--22.

\end{thebibliography}
\end{document}